\newtheorem{Ex}{Example}
\newtheorem{Quest}[Ex]{Question}
\newtheorem{Prop}[Ex]{Proposition}
\newtheorem{Th}[Ex]{Theorem}
\newtheorem{Lemma}[Ex]{Lemma}
\newtheorem{Cor}[Ex]{Corollary}
\newtheorem{Def}[Ex]{Definition}
\renewcommand{\theequation}{\mbox{\thesection;\kern.17em\arabic{equation}}}
\author{
Angelo Bella}
\address{
Dipartimento di Matematica e Informatica\\
Universit\`a di Catania\\
Viale Andrea Doria 6\\
95125 Catania}
\email{bella@dmi.unict.it}
\author{
Camillo Costantini}
\address{
Dipartimento di Matematica\\
Universit\`a di Torino\\
via Carlo Alberto 10\\
10123 Torino -- Italy}
\email{camillo.costantini@unito.it}
\author{
Santi Spadaro}
\address{
Department of Mathematics and Statistics\\
Auburn University\\
221 Parker Hall\\
Auburn, Alabama -- USA 36849-5310}
\curraddr{ Department of Mathematics\\
Ben Gurion University of the Negev\\
Be'er Sheva\\
84105 Israel }
\email{santi@cs.bgu.ac.il}
\title{$P$-spaces and the Whyburn property}
\subjclass[2000]{54G10, 54A20, 54A35, 54D20, 54B10}
\keywords{ $P$-space, Whyburn space, weakly Whyburn space, 
Lindel\"of space, pseudoradial space, radial space, radial character, 
$\omega$-modification, cardinality, weight, extent, pseudocharacter, almost disjoint family, nowhere MAD family, Continuum Hypothesis, weak Kurepa tree.
}
\date{}
\def\A{\mathcal{A}}
\def\M{\mathcal{M}}
\def\I{\mathcal{I}}
\def\L{\mathcal{L}}
\def\S{\mathcal{S}}
\def\T{\mathcal{T}}
\newcommand{\imp}{\Rightarrow}
\newcommand{\dom}{\mathop{\mathrm{dom}}}
\newcommand{\N}{\mathbb N}
\newcommand{\by}{{\bar y}}
\newcommand{\bfin}[2]{{\big\{#1\,\colon\,#2\big\}}}
\newcommand{\Bfin}[2]{{\Big\{#1\,\colon\,#2\Big\}}}
\newcommand{\fin}[2]{{\{#1\,\colon\,#2\}}}
\newcommand{\fore}[2]{{\forall #1\in #2\colon}}
\newcommand{\ext}[2]{{\exists #1\in #2\colon}}
\newcommand{\om}{{\omega}}
\newcommand{\hal}{{\hat\al}}
\newcommand{\hx}{{\hat x}}
\newcommand{\cJ}{{\mathcal J}}    \newcommand{\cI}{{\mathcal I}}
\newcommand{\vuo}{{\emptyset}}
\newcommand{\bx}{{\bar x}}
\newcommand{\stm}{{\setminus}}
\newcommand{\al}{{\alpha}}     
\newcommand{\ep}{{\varepsilon}}
\newcommand{\si}{\subseteq}
\newcommand\cS{{\mathcal S}}   \newcommand\cF{{\mathcal F}}
\newcommand\cA{{\mathcal A}}   \newcommand\cL{{\mathcal L}}
\newcommand\fs{{\varphi}}
\newcommand{\slashedell}{}\let\slashedell\l
\newcommand{\ringaccent}{}\let\ringaccent\r
\renewcommand{\l}{\relax\ifmmode\mathopen\else\slashedell\fi}
\renewcommand{\r}{\relax\ifmmode\mathclose\else\expandafter\ringaccent\fi}
\newcommand{\m}{\relax\ifmmode\mathrel\else\undefined\fi}
\newcommand{\abs}[2][]{\csname#1l\endcsname|#2\csname#1r\endcsname|}
\newcommand{\norm}[2][]{\csname#1l\endcsname\|#2\csname#1r\endcsname\|}
\newcommand{\s}[2][]{\csname#1l\endcsname\{#2\csname#1r\endcsname\}}
\newcommand{\set}[3][]%
{\csname#1l\endcsname\{\,#2\csname#1m\endcsname|#3\,\csname#1r\endcsname\}}
\newcommand\new{\newcommand}
\new\bZ{{\bf Z}}   \new\bSg{{\bf\Sigma}}
\new\tet{{\tilde\eta}}
\new\hep{{\hat\ep}}   \new\dl{{\delta}}   \new\hdl{{\hat\delta}}
\new\hf{{\hat f}}   \new\hg{{\hat g}}  \new\hn{{\hat n}}
\new\hm{{\hat m}}
\new\hht{{\hat t}}
\new\hth{{\hat\tth}}
\new\xx{{x\in X}}   \new\mn{{m\in\N}}  \new\nn{{n\in\N}}
\new\het{{\hat\eta}}   \new\hC{{\hat C}}   \new\tas{{t^\ast}}
\new\tC{{\tilde C}}
\new\bat{\big(}   \new\bct{\big)}
\new\tth{{\vartheta}}
\new\sn{{(Y,\Vert\cdot\Vert)}}   
\new\zz{{\bf 0}} \new\bby{{{\bf y}}}   \new\tn{{\tilde n}}
\new\tr{{\tilde r}}
\new\bbv{{\bf v}}   \new\bbu{{\bf u}}
\new\aep{{\ep^\ast}}   \new\sE{{E^\star}}   \new\aE{{E^\ast}}
\new\tae{{\tilde E^\ast}}
\new\ud[1]{{{\mathcal{UD}}_{#1}}}   \new\udm[1]{{{\mathcal{UD}}_{#1}^{\rm max}}}
\new\car[1]{{\vert#1\vert}}    \new\bcar[1]{{\big\vert#1\big\vert}}
\new\shE{{E^\sharp}}   \new\bbby{{\bar\bby}}
\new\sql{{\sqsubseteq}}      \new\sqs{{\sqsubset}}
\new\hxi{{\hat\xi}}     
\new\hY{{\hat Y}}   \new\hX{{\hat X}}  \new\hro{{\hat\rho}} 
\new\noi{\noindent}   \new\tA{{\tilde A}}  
\new\now[1]{{\Vert#1\Vert_{\rm w}}}
\new\bnow[1]{{\big\Vert#1\big\Vert_{\rm w}}}
\new\roi{{\rho_\infty}}    \new\row{{\rho_{\rm w}}}
\new\leff{{\preccurlyeq_f}}    \new\sleff{{\prec_f}}
\new\lef{\mathrel{\preccurlyeq_f}}   \new\slef{\mathrel{\prec_f}}
\new\sle{\sqsubseteq}
\new\zan{{\{0,1,\dots,n\}}}   \new\uan{{\{1,\dots,n\}}}
\new\tD{{\tilde D}}
\new\bBigl{\text{\raisebox{-.27ex}{\begin{Large}(\end{Large}}}\mspace{-2mu}}
\new\bBigr{\mspace{-2mu}\text{\raisebox{-.27ex}{\begin{Large})\end{Large}}}}
\new\mom{{{}^{<\om}\om_1}}
\new\omm{{{}^\om\om_1}}
\new\bF{{\mathbf{F}}}
\new\uph{{\upharpoonright}}
\begin{document}
\maketitle
\bibliographystyle{abbrv}
\begin{abstract}
\noi
We investigate the Whyburn and weakly Whyburn property in the class of $P$-spaces, that is spaces where every countable intersection of open sets is open. We construct examples of non-weakly Whyburn $P$-spaces of size continuum, thus giving a negative answer under CH to a question of Pelant, Tkachenko, Tkachuk and Wilson (\cite{PTTW}). In addition, we show that the weak Kurepa Hypothesis (an assumption weaker than CH) implies the existence of a non-weakly Whyburn $P$-space of size $\aleph_2$. Finally, we consider the behavior of the above-mentioned properties under products; we show in particular that the product of a Lindel\"of weakly Whyburn
P-space and a Lindel\"of Whyburn $P$-space is weakly Whyburn, and we give a consistent example of a non-Whyburn product of two Lindel\"of Whyburn $P$-spaces.
\end{abstract}

\section{Introduction}
We assume all spaces to be Hausdorff. All undefined notions can
be found in \cite{E} and \cite{K1}.

Whyburn and weakly Whyburn spaces have recently been considered
by various authors. They provide a natural generalization of
Fr\'echet and sequential spaces which is weak enough to offer
lots of challenges, and indeed some fundamental questions are
still open. They have also turned out to be useful in the
$M_3=M_1$ problem (see \cite{GT} and \cite{TY}). 

Let $X$ be a topological space. A set $F \subseteq X$ is said to be an \emph{almost closed} converging to $x$ if $\overline{F} \setminus F=\{x\}$; if this happens we will write $F \to x$. A space $X$ is said to be \emph{Whyburn}
(\emph{weakly Whyburn}) if for each non closed set $A \subseteq X$
and each (some) point $x \in \overline{A} \setminus A$ there is
an almost closed set $F \subseteq A$ such that $F \to x$. 

Fr\'echet spaces are Whyburn and sequential spaces are weakly
Whyburn. The class of weakly Whyburn spaces is wide enough to
include all regular scattered spaces.

Recall that a space is radial (pseudoradial) if for each $A
\subseteq X$ and every (some) point $x \in \overline{A} \setminus
A$ there is a transfinite sequence $\{x_\alpha: \alpha \in
\kappa\} \subseteq A$ which converges to $x$.  The radial character
of a pseudoradial space $X$ is the smallest cardinal $\kappa $
such that  the  definition of pseudoradiality for $X$  works  by
considering sequences of length at most $\kappa$.

The (weakly) Whyburn property behaves very nicely on compact
spaces; namely, every countably compact Whyburn space is
Fr\'echet and every compact weakly Whyburn space is pseudoradial.

A space is called a \emph{$P$-space} if every $G_\delta$ set is
open. A Lindel\"of space is one in which every open cover has a
countable subcover. Lindel\"of $P$-spaces exhibit a behaviour that
is somewhat very close to that of compact spaces; for example,
every Lindel\"of subspace of a Lindel\"of $P$-space is closed and every Lindel\"of Hausdorff $P$-space is normal.
Notice, however, that the class of Lindel\"of $P$-spaces is only
finitely productive.

The study of the Whyburn and weakly Whyburn property on $P$-spaces was initiated in \cite{PTTW}, where it is proved that every $P$-space of character $\omega_1$ is Whyburn and that not every regular Lindel\"of $P$-space is Whyburn, and a bunch of interesting problems is left open. 

Whyburn and weakly Whyburn spaces were formerly known as AP (WAP) spaces (see \cite{S1} and \cite{PT}). Although this terminology is still in use today (see for example \cite{GT}), we prefer to adopt the new name, because it sounds better and gives credit to a paper of Whyburn where these spaces were first studied (\cite{W}).

\section{Toward a small non weakly Whyburn $P$-space}
The main purpose of this section is to give several examples of non weakly Whyburn 
$P$-spaces. No example of this kind has been exhibited so far. We are particularly concerned with finding examples of minimal cardinality or with further restrictions (for example, Lindel\"of examples). Problem 3.6 in \cite{PTTW} asks whether every regular $P$-space of cardinality $\aleph_1$ is (weakly) Whyburn. Let's start by settling the weak form of this problem with a very simple ZFC construction.

\begin{Ex}
There is a zero-dimensional, non-Whyburn regular (hence, completely regular) 
$P$-space of cardinality $\omega_1$.
\end{Ex}
\begin{proof}
For each $\alpha \in \omega_1$ let $L_\alpha$ be a copy of the one-point 
lindel\"ofication of a discrete space of cardinality $\omega_1$,
$p_\alpha$ being the unique non isolated point of $L_\alpha$ (whose neighbourhoods
are all co-countable subsets of $L_\al$ containing the point $p_\al$ itself). Let $Y=\bigoplus_{\alpha\in \omega_1} L_\alpha$ and choose a point $\infty \notin Y$.
Topologize $X=Y \cup \{\infty\}$ in the following way. Neighbourhoods of
the points of $Y$ are as usual, while a neighbourhood of $\infty$ is
$\{\infty\} \cup \bigcup_{\alpha \geq \beta} T_\alpha$ for some $\beta \in \omega_1$ and some choice of a neighbourhood $T_\al$ of $p_\al$ in $L_\al$ for every $\al\ge\beta$. It is clear that $X$ is a zero-dimensional regular $P$-space of cardinality $\omega_1$. To show that it is not Whyburn consider the set $A=Y\setminus \{p_\alpha : \alpha \in \omega_1\}$ and note that $\infty \in \overline{A}$ and if $B \subseteq A$ is a set having the point $\infty$ in its closure then $\overline{B}$
contains infinitely many points $p_\alpha$.
\end{proof}
Notice that the existence of a non-Whyburn $P$-space of cardinality $\aleph_1$ and character less than $2^{\aleph_1}$ is not provable in ZFC, see Theorem 3 in \cite{BS} and the remark at the end of the section. We now come to the main dish of this section.

\begin{Ex} \label{Ex: nonwwh} 
There is a completely regular non-weakly Whyburn $P$-space of cardinality 
$\mathfrak{c}$.
\end{Ex}
\begin{proof}  
Let $Y=\mom$, $Z=\omm$ and $X=Y\cup Z$. We will introduce a topology on $X$ in the following way. Set $\Gamma={}^Y\om_1={}^{(\mom)}\om_1$, and for every $g\in\Gamma$ and $\fs\in Y$ put
\[
V_{\fs,g}=\{\psi\in X\,\vert\,\psi\text{ is an extension }\text{of }\fs
\wedge\ \fore n{(\dom\psi\stm\dom\fs)}\psi(n)\ge g(\psi\uph_n)\}\eqno(\ast)
\] 
(observe, in particular, that $\fs$ always belongs to $V_{\fs,g}$).\par 
First of all, we want to show that associating to every $\fs\in Y$ the collection $\{V_{\fs,g}
\,\colon\,g\in\Gamma\}$ as a fundamental system  of (open) neighbourhoods, and letting every $\fs\in Z$ to be isolated, gives rise to a topology of $P$-space on $X$. Actually, we have to show that
\begin{enumerate}
\item $\fore\fs Y\fore g\Gamma\fore\psi{V_{\fs,g}\cap Y}\ext{g'}\Gamma V_{\psi,g'}\si V_{\fs,g}$;
\item $\fore\fs Y\fore{\Gamma'}{[\Gamma]^{\le\om}}\ext g\Gamma V_{\fs,g}\si\bigcap_{g'\in\Gamma'}V_{\fs,g'}$.
\end{enumerate}\par
To prove (1), simply take $g'=g$; we claim that $V_{\psi,g}\si V_{\fs,g}$. Indeed, if $\eta\in V_{\psi,g}$, then first of all $\eta$ extends $\psi$; since $\psi$, in turn, extends $\fs$ (as $\psi\in V_{\fs,g}$), we have that $\eta$ extends $\fs$. Therefore, to prove that $\eta\in V_{\fs,g}$, we only have to show that $\eta(n)\ge g(\eta\uph _n)$ for every $n\in\dom\eta\stm\dom\fs$. Now, if $n\in\dom\eta\stm\dom\fs$, then either $n\in\dom\psi\stm\dom\fs$, in which case $\eta(n)=\psi(n)\ge g(\psi\uph_n)=g(\eta\uph_n)$ (as $\eta$ extends 
$\psi$ and $\psi\in V_{\fs,g}$), or $n\in\dom\eta\stm\dom\psi$, in which case we have 
$\eta(n)\ge g(\eta\uph_n)$ simply because $\eta\in V_{\psi,g}$.\par
As for property (2), let $\hat\fs\in Y$ and $\Gamma'$ be a countable subset of 
$\Gamma$, and define $g\in\Gamma$ by letting $g(\fs)=\sup\fin{g'(\fs)}{g'\in\Gamma'}$.
Now, for each $\fs\in Y$, if $\psi$ is an arbitrary element of $V_{\hat\fs,g}$ and $\hat g'$ is an arbitrary element of $\Gamma'$, then to prove that $\psi\in V_{\hat\fs,\hat g'}$ we just have to show that $\psi(n)\ge\hat g'(\psi\uph_n)$ for every $n\in \dom\psi\stm\dom\hat\fs$. Indeed, if $n\in \dom\psi\stm\dom\hat\fs$, then from $\psi\in V_{\hat\fs,g}$ it follows that $\psi(n)\ge g(\psi\uph_n)=\sup\fin{g'(\psi\uph_n)}{g'\in\Gamma'}\ge\hat g'(\psi\uph_n)$.\par
Let $\tau$ be the topology generated on $X$ by associating to every $\fs\in X$ the fundamental system of (open) neighbourhoods $\cI_\fs$, defined as
\[
\cI_\fs=
\begin{cases}
\fin{V_{\fs,g}}{g\in\Gamma}&\text{if $\fs\in Y$;}\\
\big\{\{\fs\}\big\}&\text{if $\fs\in Z$.}
\end{cases}
\]
We want to prove now that $(X,\tau)$ is a completely regular space. To this end, we will first show that $(X,\tau)$ is T$_0$, and then that for every $\fs\in X$ and every $V\in
\cI_\fs$, $V$ is a closed (and open) set. As for the T$_0$ property, let $\fs',\fs''$ be distinct elements of $X$; then one of them, say $\fs'$, has domain not less than that of the other element. This implies in particular (as $\fs'\neq\fs''$) that $\fs''$ cannot be an extension of $\fs'$; therefore, fixing any $g'\in\Gamma$, it follows from $(\ast)$ that 
$V_{\fs',g'}$ is a neighbourhood of $\fs'$ which does not contain $\fs''$. Let now show that every collection $\cI_\fs$ consists of closed sets. If $\fs\in Z$, then $\{\fs\}$ is the only element of $\cI_\fs$; thus, consider an arbitrary $\psi\in X\stm\{\fs\}$. Of course, if $\psi\in Z$ then $\{\psi\}$ is a neighbourhood of $\psi$ disjoint from $\{\fs\}$; therefore, we may assume $\psi\in Y$, and consider an $\hn\in\om$ with $\hn\ge\dom\psi$. Also, fix an element $\hg$ of $\Gamma$ such that $\hg(\fs\uph_\hn)>\fs(\hn)$; then it follows from 
$(\ast)$ that $\fs\notin V_{\psi,\hg}$, i.e. $\{\fs\}\cap V_{\psi,\hg}=\vuo$. Suppose now that $\fs\in Y$. Let $g$ be any element of $\Gamma$, and consider an arbitrary $\psi\in X\stm V_{\fs,g}$---also, since the case $\psi\in Z$ is again trivial, we may assume that $\psi\in Y\stm V_{\fs,g}$. By $(\ast)$, the fact that $\psi\notin V_{\fs,g}$ means that $\psi$ is not an extension of $\fs$, or that there exists $\hn\in\dom\psi\stm\dom\fs$ with $\psi(\hn)<g(\psi\uph_\hn)$. If we are in this second case, then taking any $g^\ast\in\Gamma$
we see that every $\psi'\in V_{\psi,g^\ast}$ is in particular an extension of $\psi$, so that $\hn\in\dom\psi\stm\dom\fs\si\dom\psi'\stm\dom\fs$ and $\psi'(\hn)=\psi(\hn)<g(\psi\uph_\hn)=g(\psi'\uph_\hn)$; as a consequence, $V_{\psi,g^\ast}\cap V_{\fs,g}=\vuo$. Suppose now that $\psi$ is not an extension of $\fs$. If $\fs$ is not an extension of 
$\psi$, either, then there exists $\hn\in\dom\fs\cap\dom\psi$ with $\fs(\hn)\neq\psi(\hn)$; of course, this entails that $\fs$ and $\psi$ cannot have any common extension, hence in particular taking any $g^\ast\in\Gamma$ it will follow again that $V_{\psi,g^\ast}\cap V_{\fs,g}=\vuo$. If, on the contrary, $\fs$ is a (proper) extension of $\psi$, then let 
$\hn=\dom\psi$ and fix a $g^\ast\in\Gamma$ such that $g^\ast(\psi)>\fs(\hn)$; since every $\eta\in V_{\fs,g}$ is in particular an extension of $\fs$, we will have the inequality 
$\eta(\hn)=\fs(\hn)<g^\ast(\psi)=g^\ast(\fs\uph_\hn)=g^\ast(\eta\uph_\hn)$---so that, by $(\ast)$, $\eta\notin V_{\psi,g^\ast}$. Therefore, we still see that $V_{\psi,g^\ast}\cap V_{\fs,g}=\vuo$.\par
Finally, we show that $(X,\tau)$ is not weakly Whyburn. First of all, notice that $Z$ is not closed in $(X,\tau)$; actually, $Z$ turns out to be even dense. Indeed, let $\fs$ be any element of $Y$, and $V_{\fs,g}$ (with $g\in\Gamma$) be an arbitrary basic neighbourhood of $\fs$. Then put $\hn=\dom\fs$, and define by induction an $\al_n\in\om_1$ for every $n\ge\hn$ in the following way.\par\noi
--- $\al_\hn=g(\fs)$;\par\noi
--- $\al_{n+1}=g(\fs\,{}^{\smallfrown}\langle\al_\hn,\al_{\hn+1},\dots,\al_n\rangle)$.\par\noi
Now, let $\psi\in Z$ be defined by
\[
\psi(n)=
\begin{cases}
\fs(n)&\text{for $n<\hn$;}\\
\al_n&\text{for $n\ge\hn$;}
\end{cases}
\]
clearly, $\psi$ extends $\fs$, and for every $n\ge\hn$ (i.e., for every $n\in\dom\psi\stm\dom\fs$) we have that $\psi(n)=\al_n=g(\psi\uph_n)$. Therefore, it follows from $(\ast)$ that $\psi\in V_{\fs,g}$, so that $V_{\fs,g}\cap Z\neq\vuo$.\par
Thus, we will conclude the proof that $(X,\tau)$ is not weakly Whyburn by showing that if $M\si Z$ and $\hat\fs\in Y\cap\overline M$, then there exists an element of $Y\stm\{\hat\fs\}$ which belongs to $\overline M$. Actually, we will show more precisely that there exists an $\al\in\om_1$ such that $\hat\fs\,{}^{\smallfrown}\langle\al\rangle\in\overline M$. As a matter of fact, toward a contradiction, suppose that for every $\al\in\om_1$ there exists $g_\al\in\Gamma$ such that 
\[
V_{\hat\fs\,{}^{\smallfrown}\langle\al\rangle,g_\al}\cap M=\vuo.\eqno(\sharp)
\]
Then put $\hn=\dom\hat\fs$ and consider a $\hg\in\Gamma$ such that 
\[
\fore\fs Y\big(\dom\fs\ge\hn+1\Longrightarrow\hg(\fs)=g_{\fs(\hn)}(\fs)\big).\eqno(\spadesuit)
\]
Since $\hat\fs\in\overline M$, there must exist an $\eta\in V_{\hat\fs,\hg}\cap M$, and this $\eta$ will be in particular an extension of 
$\hat\fs$---hence also of $\hat\fs\,{}^{\smallfrown}\langle\eta(\hn)\rangle$. However, since $V_{\hat\fs\,{}^{\smallfrown}\langle\eta(\hn)\rangle, g_{\eta(\hn)}}\cap M=\vuo$
(by $(\sharp)$), we have in particular that $\eta\notin V_{\hat\fs\,{}^{\smallfrown}\langle\eta(\hn)\rangle, g_{\eta(\hn)}}$. But $\eta$ is an extension of $\hat\fs\,{}^{\smallfrown}\langle\eta(\hn)\rangle$, thus by $(\ast)$ there must exist an $n^\ast\in\om\stm\dom\big(
\hat\fs\,{}^{\smallfrown}\langle\eta(\hn)\rangle\big)=\om\stm(\hn+1)$ such that $\eta(n^\ast)<g_{\eta(\hn)}\big(\eta\uph_{n^\ast}\big)$. Thus, letting $\fs^\ast=\eta\uph_{n^\ast}$, it follows from $(\spadesuit)$ (as $\dom\fs^\ast=n^\ast\ge\hn+1$) that $\eta(n^\ast)<
g_{\eta(\hn)}\big(\eta\uph_{n^\ast}\big)=g_{\fs^\ast(\hn)}(\fs^\ast)=\hg(\fs^\ast)=\hg(\eta\uph_{n^\ast})$. Clearly, this contradicts the fact that $\eta\in V_{\hat\fs,\hg}$ (take again $(\ast)$ into account).  
\end{proof}

Here is another way to construct a non weakly Whyburn $P$-space of cardinality $\mathfrak{c}$. Murtinov\'a (\cite{M}) has come up independently with a similar example. We will exploit the following example of a Baire $P$-space and a trick that Gruenhage and Tamano (\cite{GT}) used to get a countable stratifiable non-weakly Whyburn space.

\begin{Ex} \label{meager}
There exists a $T_1$ regular dense-in-itself $P$-space of cardinality continuum in which every meager set is nowhere dense.
\end{Ex}
\begin{proof}
Recall that the \emph{support} of a function is the greatest subset of its domain where the function never vanishes. Let $X=\{f\colon\omega_1\to\omega\ \colon\,f\mathrm{\ has
\  \emph{countable}\ support}\}$. Set $B(f, \alpha)=\{g \in X\ \colon\ \forall\beta\leq\alpha\colon g(\beta)=f(\beta)\}$. We define a topology on $X$ by declaring $\{B(f,\alpha): \alpha \in \omega_1\}$ to be a fundamental system of neighbourhoods at $f$. It is easy to see that $X$ is a dense-in-itself $P$-space and that $X$ is $T_1$ and zero-dimensional. Let $\{N_n: n \in \omega \}$ be a countable family of nowhere dense subsets of $X$. Let $f \in X$ and $\alpha \in \omega_1$. Since $N_0$ is nowhere dense, there must be a function $g_0 \in B(f, \alpha)$ and an ordinal $\alpha_0$ such that $B(g_0, \alpha_0) \cap N_0=\emptyset$. Since $B(f, \alpha)$ is an open set we can suppose $B(g_0, \alpha_0) \subset B(f, \alpha)$, and since $\{B(g_0, \gamma) : \gamma \in \omega_1 \}$ is a decreasing family, we can suppose that $\alpha_0>\alpha$. Suppose we have found points $\{g_i : i<k \}$ and increasing ordinals $\{\alpha_i : i <k \}$ in such a way that $B(g_j, \alpha_j) \subset B(g_{j-1}, \alpha_{j-1})$ and $B(g_j, \alpha_j) \cap N_j=\emptyset$ for each $j<k$. Since $N_k$ is nowhere dense there must be a point in $g_k \in B(g_{k-1}, \alpha_{k-1})$ such that $g_k \notin \overline{N_k}$, and thus there must be $\alpha_k$ such that $B(g_k, \alpha_k) \cap N_k=\emptyset$. Again we can suppose $B(g_{k}, \alpha_k) \subset B(g_{k-1}, \alpha_{k-1})$ and $\alpha_k>\alpha_{k-1}$. At the end of the induction the set $\bigcap \{B(g_i, \alpha_i) : i \in \omega \}$ will be a nonempty open set contained in $B(f, \alpha)$, and disjoint from $\bigcup \{N_n : n \in \omega \}$ (to check that $\bigcap\fin{B(g_i,\al_i)}{i\in\om}\neq\vuo$, consider that such a set includes $B(\hat g,\hal)$, where $\hal=\sup\fin{\al_i}{i\in\om}$ and $\hat g$ is any element of $X$ such that $\hat g(\al')=g_i(\al')$ for every $\al'\in\om_1$ and $i\in\om$ with $\al'\le\al_i$). Picking any point $h \in \bigcap \{B(g_i, \alpha_i) \,\colon\, i \in \omega \}$ we see that $h$ is not in the closure $\bigcup_{n \in \omega} N_n$, therefore $B(f, \alpha)$ cannot be contained in $\overline{\bigcup_{n \in \omega} N_n}$. Since the choice of $f$ and $\alpha$ was arbitrary it turns out that $\bigcup_{n \in \omega} N_n$ is nowhere dense.
\end{proof}

Recall that a \emph{weak $P$-space} is a space in which every countable set is closed.

\begin{Ex} \label{Ex: nonwwh2}
There are a non-weakly Whyburn $P$-space of cardinality $\mathfrak{c}$ and a non weakly-Whyburn weak $P$-space of cardinality $\omega_1$ in ZFC.
\end{Ex}

\begin{proof}
Let $X_1$ and $X_2$ be copies of the space $X$ in Example $\ref{meager}$, $Y_1$ and $Y_2$ be copies of the space $Y$ described in the above remark and $f$ be a bijection between either $X_1$ and $X_2$ or $Y_1$ and $Y_2$ (the context will clarify which spaces we mean). Put $Z=X_1 \cup X_2$ and $W=Y_1 \cup Y_2$. Take each point of $X_2$ ($Y_2$) to be isolated and declare a neighbourhood of $x$ in $X_1$ ($Y_1$) to be of the form $U \cup f(U)\setminus f(N)$, where $U$ is a neighbourhood of $x$ in $X$ ($Y$), and $N$ is a nowhere dense set in $X_1$ ($Y_1$).

\emph{$Z$ is a $P$-space:} Let $\mathcal{U}=\{U_n \cup (f(U_n) \setminus f(N_n)) : n \in \omega\}$ be a countable family of neighbourhoods of a point of $z \in X_1$. Then $\bigcap \mathcal{U}= \bigcap_{n \in \omega} U_n \cap \bigcap_{i \in \omega} f(U_i) \setminus \bigcup_{k \in \omega} f(N_k)$. The fact that $X$ is a $P$-space where every meager set is nowhere dense implies that the last set is a neighbourhood of $z$ in $Z$. Thus every point of $Z$ is a $P$-point, i.e., $Z$ is a $P$-space.

\emph{$W$ is a weak $P$-space:} Indeed, let $C \subset W$ be a countable set and $x \in Y_1\stm C$. Let $C_1=C \cap Y_1$ and $C_2=C \cap Y_2$. If $x \notin f^{-1}(C_2)$ then  let $U$ be a neighbouthood of $x$ in $Y_1$ disjoint from $C_1 \cup f^{-1}(C_2)$. Then $U \cup f(U)$ will be a neighbourhood of $x$ in $W$ disjoint from $C$. If $x \in f^{-1}(C_2)$ let $U$ be a neighbourhood of $x$ disjoint from $C_1$, and as $Y$ is a $P$-space, the countable set $f^{-1}(C_2)$ will be nowhere dense in it (in fact discrete) so $U \cup (f(U) \setminus C_2)$ is a neighbourhood of $x$ in $W$ disjoint from $C$.

\emph{$Z$ and $W$ are not weakly Whyburn:} The set $X_2$ is not closed, however there is no almost closed set converging outside $X_2$. In fact if $A \subset X_2$ is a 
non-closed set in $Z$ then $f^{-1}(A)$ cannot be nowhere dense, hence $Int(\overline{f^{-1}(A)}) \neq \emptyset$. Moreover this last set must be uncountable and dense-in-itself, or otherwise $X$ would have isolated points. By the definition of the topology on $Z$, every point of $Int(\overline{f^{-1}(A)})$ will be in the closure of $A$. This proof also works for the space $W=Y_1 \cup Y_2$.
\end{proof}

Thus, under [CH] we get a consistent negative answer to Problem 3.6 in \cite{PTTW}.

\begin{Quest} \label{quest}
Does there exist a model of ZFC + not CH in which there is a Baire $P$-space of cardinality $\omega_1$?
\end{Quest}

It is easy to realize that the fact that every countable union of nowhere dense sets is nowhere dense is equivalent to being a Baire space in the realm of $P$-spaces.

There is no hope of getting a ZFC example like in Question $\ref{quest}$ because the non-existence of a Baire space of cardinality $\omega_1$ is known to be consistent (see \cite{ST}).

By means of the contruction of Example $\ref{Ex: nonwwh2}$, a positive answer to the previous question would guarantee the existence of a non-weakly Whyburn $P$-space of cardinality $\omega_1$ in a model where the continuum hypotheses fails. Yet, the following problem would still be open.

\begin{Quest}
Is there a ZFC example of a non-weakly Whyburn $P$-space of cardinality $\aleph_1$?
\end{Quest}

In \cite{PTTW} it was remarked that every Lindelof $P$-space of cardinality $\aleph_1$ is Whyburn. One may wonder if the same applies to weak $P$-spaces.

\begin{Th}[{[CH]}]
There exists a regular Lindel\"of non weakly Whyburn weak $P$-space of cardinality and character $\aleph_1$.
\end{Th}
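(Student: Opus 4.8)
The plan is to combine, under CH, the two ingredients already developed in this section: a non-weakly Whyburn weak $P$-space witnessed by a dense set of isolated points converging to an uncountable "obstruction" set (as in Examples \ref{Ex: nonwwh} and \ref{Ex: nonwwh2}), and Lindel\"ofness, which under CH can be arranged because all the relevant sets have size $\aleph_1=\mathfrak{c}$. Concretely, I would take the space $W = Y_1 \cup Y_2$ from Example \ref{Ex: nonwwh2}, where $Y$ is the space mentioned in the remark following Example \ref{Ex: nonwwh} (a $T_1$ regular $P$-space of size $\aleph_1$ in which every countable set is nowhere dense, i.e., discrete). Under CH this $Y$ has cardinality and character $\aleph_1$. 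The doubling construction makes $Y_2$ a dense set of isolated points, each point of $Y_1$ gets neighbourhoods of the form $U \cup (f(U)\setminus f(N))$ with $N$ nowhere dense in $Y_1$, and the argument in Example \ref{Ex: nonwwh2} already shows $W$ is a weak $P$-space of size $\aleph_1$ that is not weakly Whyburn (the non-closed set $Y_2$ has no almost-closed subset converging to a single point of $Y_1$). So the only genuinely new point to verify is \emph{Lindel\"ofness}.

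The first step is therefore to choose $Y$ so that it is itself Lindel\"of; the natural candidate under CH is a suitable subspace of $\omega_1^{\omega_1}$ or, more simply, the one-point Lindel\"ofication type of construction, but one must check it is a \emph{weak} $P$-space with every countable set nowhere dense. A clean choice: let $Y$ be $\omega_1$ with the topology in which a set is open iff it is co-countable or does not contain some fixed point — no; better, take $Y$ to be a Lindel\"of $P$-space of weight $\aleph_1$ whose countable subsets are closed and nowhere dense, which exists in ZFC (e.g., an appropriate $\Sigma$-product-like or $\omega$-modification construction of size $\aleph_1$ under CH). Granting such a $Y$, I would then prove $W$ is Lindel\"of directly: given an open cover $\mathcal{U}$ of $W$, first extract from it a cover of the closed (hence Lindel\"of, being a continuous image of the Lindel\"of $Y$) subspace $Y_1$; this yields countably many basic sets $U_n \cup (f(U_n)\setminus f(N_n))$ covering $Y_1$. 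The union of these covers all of $Y_1$ and all of $f(U_n)\setminus f(N_n)$; what remains uncovered in $Y_2 = f[Y]$ is contained in $\bigcup_n f(N_n)$, i.e.\ in $f[\bigcup_n N_n]$. Since each $N_n$ is nowhere dense and $Y$ is a $P$-space, $\bigcup_n N_n$ is nowhere dense — in particular it has size $\le\aleph_1=\mathfrak{c}$, and its image consists of isolated points of $W$, so it is covered by $\aleph_1$ members of $\mathcal{U}$. Under CH this gives a subcover of size $\aleph_1$; to get a \emph{countable} subcover one must be more careful.

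The main obstacle is exactly this last gap: naively the leftover isolated points number $\aleph_1$, so one does \emph{not} immediately get a countable subcover, only a subcover of size $\aleph_1$ — which is not Lindel\"ofness. To close it I would strengthen the choice of $Y$ (and hence the nowhere-dense sets $N$ allowed as "holes" in the neighbourhoods of $Y_1$): insist that in $Y$ every nowhere dense set, equivalently every countable set, be not merely nowhere dense but \emph{closed discrete and relatively clopen}, and arrange that the holes $f(N)$ one ever needs can be taken \emph{finite}, or at least that the relevant cover of $Y_1$ can be refined so that $\bigcup_n N_n$ is contained in a single countable — hence, since it is closed and $Y$ is Lindel\"of $P$, actually \emph{finite}? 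That cannot be forced in general, so the real fix is to redo the doubling over a base space $Y$ in which the ideal of "allowed holes" is generated by finite sets while $Y_2$ is still dense; equivalently, to use an almost disjoint / nowhere MAD family construction of size $\aleph_1$ (as hinted by the keywords) so that each point of $Y_1$ sees all but finitely many points of $Y_2$ inside a small neighbourhood. With that modification, covering $Y_1$ by countably many basic sets leaves only finitely many isolated points of $Y_2$ uncovered, these are absorbed into finitely more members of $\mathcal{U}$, and $W$ is Lindel\"of. I expect the bulk of the write-up to be the explicit description of this base space $Y$ under CH and the verification that it is a Lindel\"of weak $P$-space of character $\aleph_1$ whose "hole ideal" is generated by finite sets while keeping $Y_2$ dense and the non-weakly-Whyburn argument intact; everything else then follows as in Example \ref{Ex: nonwwh2}.
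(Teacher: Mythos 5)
You have correctly located the real difficulty --- with holes of the form $f(N)$ for $N$ an arbitrary nowhere dense set, a countable cover of $Y_1$ can leave $\aleph_1$ many isolated points of $Y_2$ uncovered, so Lindel\"ofness fails --- and your instinct that the holes must be shrunk to singletons is the right one: the paper's space is precisely the Alexandroff duplicate of a suitable base space $Z$, where each basic neighbourhood of a point of $Z\times\{0\}$ omits only one isolated point, so that countably many basic sets covering $Z\times\{0\}$ miss only countably many points of $Z\times\{1\}$. But your write-up stops exactly where the theorem begins: you never produce the base space, and you explicitly defer it (``Granting such a $Y$\dots'', ``I expect the bulk of the write-up to be the explicit description of this base space''). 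That base space is the entire content of the theorem and the only place CH enters. Once the holes are singletons, the non-weakly-Whyburn argument of Example~\ref{Ex: nonwwh2} requires that no subset of $Z$ have exactly one accumulation point; for this one needs every subset with an accumulation point to be somewhere dense, i.e.\ one needs a dense-in-itself Lindel\"of weak $P$-space of size and character $\aleph_1$ that is \emph{Luzin} (every nowhere dense set is countable). Your suggested substitutes ($\Sigma$-product-like constructions, $\omega$-modifications, almost disjoint or nowhere MAD families) are not developed and do not obviously yield such a space; the nowhere MAD machinery in particular belongs to the product section of the paper and produces Whyburn, not non-weakly-Whyburn, examples.

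The paper's actual route is short: let $Y$ be the real line with the density topology, a Tychonoff weak $P$-space that is ccc, Baire, dense in itself and, under CH, of $\pi$-weight $\aleph_1$; the standard transfinite argument then yields a Luzin subspace $Z\subseteq Y$. Luzin plus ccc gives that $Z$ is Lindel\"of, hence so is its Alexandroff duplicate $X$, which is a weak $P$-space of cardinality and character $\aleph_1$ and is not weakly Whyburn: if $A$ is a non-closed subset of the isolated copy, then the corresponding set $f^{-1}(A)\subseteq Z$ is uncountable (countable sets are closed discrete), hence not nowhere dense (Luzin), hence $\overline{f^{-1}(A)}$ has nonempty interior, every point of which is an accumulation point of $f^{-1}(A)$ because $Z$ is dense in itself; so $\overline{A}\setminus A$ is uncountable. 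Two smaller slips in your text: the base space here is only a \emph{weak} $P$-space, so your step ``each $N_n$ is nowhere dense and $Y$ is a $P$-space, hence $\bigcup_n N_n$ is nowhere dense'' is not available; and ``every countable set is nowhere dense'' is the wrong direction --- what is needed is the converse, Luzin-type implication that every nowhere dense set is countable.
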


\begin{proof}
Let $Y$ be the real line with the density topology (see for example \cite{T}). This space is known to be a Tychonoff weak $P$-space. Also, since it is a dense-in-itself ccc Baire space having $\pi$-weight $\aleph_1$, by a well-known argument (see for example \cite{R}) it has a Luzin subspace $Z \subset Y$, i.e. a subspace in which every nowhere dense set is countable. Let $X$ be the Alexandroff duplicate of $Z$. It is clear that $X$ is a weak $P$-space, it is Lindel\"of because $Z$ is, and an argument similar to the one in 
Example~\ref{Ex: nonwwh2} shows that $X$ is not weakly Whyburn.
\end{proof}

\begin{Quest}
Can CH be removed from the previous theorem?
\end{Quest}

Going back to non weakly Whyburn $P$-spaces, we are now going to produce an example of cardinality $\aleph_2$ by using a set theoretic axiom weaker than CH. 
A tree $T$ of  cardinality $\omega_1$ with $\omega_2$-many
uncountable branches is called a {\it weak Kurepa tree}. Using a weak
Kurepa tree $T$ is non difficult to construct a regular $P$-space
$X$ of weight $\omega_1$ and cardinality $\omega_2$; take as
points of $X$ the uncountable branches and as a base the sets of the
form $\hat t=\{B\in X :t\in B\}$ for each $t\in T$. 

Unfortunately, the existence of a weak Kurepa tree is not
provable in ZFC (see Section 8  in \cite {We}).  The {\it weak Kurepa
Hypothesis}, briefly wKH, is the assertion that there exists a
weak Kurepa tree. In \cite{We} it is pointed out that CH implies
wKH, but  wKH is somehow much weaker than CH.

\begin{Ex}[{[{\sf wKH}]}]\label{Ex: nonwwh2'} 
There is a non-weakly Whyburn regular $P$-space of cardinality $\omega_2$.
\end{Ex}
\begin{proof} Let $X$ be a regular $P$-space of weight $\omega_1$ and cardinality
$\omega_2$  and let  $Y=\omega_2\cup\{p\}$,  topologized in such
a way that  every subset of
$\omega_2$ is open and  a set  $U\ni p$ is open whenever
$|Y\setminus U|\le \omega_1$. Fix a one-to-one  mapping
$f:Y\rightarrow X$ and let $Z=Y\times X$.  Obviously $Z$ is a
regular $P$-space of cardinality $\omega_2$. We claim that $Z$ 
does
not have the weak Whyburn property. Let $A=\{(\alpha ,f(\alpha
)) : \alpha \in \omega_2\}\subseteq Z$. Since the space
$X$ has weight $\omega_1$, the set $T$ of all  complete
accumulation points of  the set $\{f(\alpha ) :\alpha \in
\omega_2\}$ has cardinality $\omega_2$. It is easy to
check that  $\{p\}\times T\subseteq \overline A$  and so $A$ is
not closed in $Z$. Let $B=\{(\alpha ,f(\alpha )) :\alpha \in C\}$
be a subset of $A$.  A point can be in $\overline B\setminus A$
only if it is of the form $(p,x)$ and this may happen  only if
$|B|=\omega_2$. As before,  the set $T'$ of all complete
accumulation points of the set $f(C)$ has cardinality $\omega_2$
and  $\{p\}\times T'\subseteq \overline B\setminus A$.  
Therefore, no almost closed subset of $A$ can converge to a point
outside $A$ and we conclude that $Z$ does not have the weak
Whyburn property. 
\end{proof}

\begin{Quest}
Is there a ZFC example of a non-weakly Whyburn $P$-space of cardinality $\omega_2$?
\end{Quest}

The next result, which is of independent interest, shows that a positive answer to the previous question would require a different approach than that of our example $\ref{Ex: nonwwh2}$. We are going to use the following simple lemma.

\begin{Lemma}\label{1wkt}
Every regular $P$-space $X$ with $w(X)\le\omega_1$ has a base of cardinality not greater than 
$\omega_1$, consisting of clopen sets.
\end{Lemma}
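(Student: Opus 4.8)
The plan is to first upgrade regularity to zero-dimensionality, and then to thin out the resulting clopen base by a cardinality count based on the Lindel\"of number.

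\emph{Step 1: $X$ is zero-dimensional.} Given a point $x$ and an open set $U\ni x$, I would use regularity to choose open sets $V_0\supseteq\overline{V_0}\supseteq V_1\supseteq\overline{V_1}\supseteq\cdots$ with $x\in V_n$ for every $n$ and $\overline{V_0}\subseteq U$ (at stage $n$ apply regularity to the pair $x\in V_n$ to obtain $V_{n+1}$ with $x\in V_{n+1}\subseteq\overline{V_{n+1}}\subseteq V_n$). Then $W=\bigcap_{n\in\omega}V_n=\bigcap_{n\in\omega}\overline{V_n}$ is, on the one hand, a countable intersection of open sets, hence open because $X$ is a $P$-space, and, on the other hand, an intersection of closed sets, hence closed; so $W$ is clopen and $x\in W\subseteq U$. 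Thus the clopen subsets of $X$ form a base.

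\emph{Step 2: extracting a small clopen base.} Fix a base $\mathcal{B}$ of $X$ with $|\mathcal{B}|\le\omega_1$. For each $B\in\mathcal{B}$ the subspace $B$ has weight at most $w(X)\le\omega_1$, and since the Lindel\"of number of any space is bounded above by its weight, the open cover of $B$ consisting of all clopen (in $X$) subsets of $B$ provided by Step 1 admits a subcover $\mathcal{D}_B$ with $|\mathcal{D}_B|\le\omega_1$. Put $\mathcal{C}=\bigcup_{B\in\mathcal{B}}\mathcal{D}_B$; then $|\mathcal{C}|\le\omega_1$ and every member of $\mathcal{C}$ is clopen in $X$.

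\emph{Step 3: $\mathcal{C}$ is a base.} Given $x$ and an open $U\ni x$, pick $B\in\mathcal{B}$ with $x\in B\subseteq U$ and then some $D\in\mathcal{D}_B$ with $x\in D$ (possible since $\mathcal{D}_B$ covers $B$); as $D\subseteq B\subseteq U$ and $D$ is clopen, this completes the verification.

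I do not expect a serious obstacle here: the only place where both hypotheses are genuinely used is Step 1, where regularity produces the nested sequence and the $P$-space property collapses it to a clopen set. The two points to be careful about are the identity $\bigcap_n V_n=\bigcap_n\overline{V_n}$, which relies on $\overline{V_{n+1}}\subseteq V_n$, and the trivial but necessary observation that a union of at most $\omega_1$ families each of size at most $\omega_1$ still has size at most $\omega_1$.
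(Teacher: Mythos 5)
Your proof is correct and follows the same route as the paper: the paper's two-line proof simply cites Engelking's Theorem 1.1.15 for the extraction of a base of cardinality $\le w(X)$ from the clopen base, and notes that every regular $P$-space is zero-dimensional — exactly the two facts you prove in detail (your Step 1 is the standard nested-closure argument for zero-dimensionality, and your Steps 2--3 give a correct self-contained substitute for the Engelking citation via the bound $l(B)\le w(X)$). No gaps.
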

\begin{proof}
By \cite[Theorem~1.1.15]{E}, it suffices to show that the clopen sets form a base for $X$, and this holds because every regular $P$-space is $0$-dimensional. 
\end{proof}

\begin{Prop}
If there exists a regular $P$-space $X$ with $\vert X\vert\ge\om_2$ and $w(X)\le\om_1$, then there exists a weak Kurepa tree.  
\end{Prop}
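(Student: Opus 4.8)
The plan is to read a weak Kurepa tree directly off a clopen base of $X$. First I would reduce to the case $|X|=\omega_2$ by passing to a subspace of that cardinality, which is again a regular $P$-space of weight at most $\omega_1$. Then, by Lemma~\ref{1wkt}, I fix a base $\{B_\alpha:\alpha<\omega_1\}$ of $X$ consisting of clopen sets. The tree will be a subtree of ${}^{<\omega_1}2$: for a countable ordinal $\beta$ and $s\in{}^\beta 2$ set
$B_s=\bigcap_{\alpha<\beta}B_\alpha^{s(\alpha)}$, where $B_\alpha^1=B_\alpha$ and $B_\alpha^0=X\setminus B_\alpha$.

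The crucial use of the hypothesis enters here: since $\beta<\omega_1$ the intersection is over countably many clopen sets, so by the $P$-space property $B_s$ is open, hence (being also closed) clopen; and for fixed $\beta$ the family $\{B_s:s\in{}^\beta 2\}$ partitions $X$ into clopen pieces. Let $T=\{s\in{}^{<\omega_1}2:B_s\neq\emptyset\}$. It is downward closed, since $\gamma\le\dom s$ implies $B_{s\uph\gamma}\supseteq B_s$, so $T$ is a subtree of ${}^{<\omega_1}2$ with $\beta$-th level $T_\beta=T\cap{}^\beta 2$. To bound $|T|$ I would note that for each $\beta$ the sets $\{B_s:s\in T_\beta\}$ form a pairwise disjoint family of nonempty open subsets of $X$, so $|T_\beta|\le c(X)\le w(X)\le\omega_1$; summing over the $\omega_1$ levels gives $|T|\le\omega_1$. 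Conversely, for $x\in X$ let $\chi_x\colon\omega_1\to2$ be defined by $\chi_x(\alpha)=1$ iff $x\in B_\alpha$; then $x\in B_{\chi_x\uph\beta}$ for every $\beta<\omega_1$, so every level of $T$ is nonempty (whence $|T|=\omega_1$) and each $\chi_x$ is a branch of $T$ of length $\omega_1$. Since $X$ is $T_0$ and $\{B_\alpha\}$ is a base, the map $x\mapsto\chi_x$ is injective, so $T$ has at least $|X|=\omega_2$ uncountable branches and is therefore a weak Kurepa tree.

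The single point that needs genuine care — and the heart of the argument — is recognizing that $P$-spaceness upgrades the trivial fact ``finite Boolean combinations of base elements are clopen'' to ``every combination of fewer than $\omega_1$ base elements is clopen'', so that the levels of the splitting tree are honest partitions into open sets and hence have size at most $w(X)\le\omega_1$ by the cellularity inequality; the rest is bookkeeping. One should also observe that $T$ may carry additional ``phantom'' branches $b$ for which $\bigcap_{\beta<\omega_1}B_{b\uph\beta}=\emptyset$ — countable intersections of the clopen pieces stay open, but $\omega_1$-sized ones need not — yet this only increases the number of uncountable branches and so causes no harm.
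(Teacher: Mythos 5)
Your proof is correct and is essentially the paper's own argument in cleaner clothing: the paper builds the same refining clopen partitions by transfinite recursion (splitting each piece by the next base element and intersecting along decreasing chains at limits) and then forms the tree of pairs $(A,\alpha)$, which is exactly your tree of nonempty Boolean combinations $B_s$ indexed by $s\in{}^{<\omega_1}2$. All the key points match --- the clopen base from the lemma, the $P$-space property making countable intersections clopen, the cellularity/density bound $\le\omega_1$ on each level, and injectivity of $x\mapsto\chi_x$ from point separation --- so no further comment is needed.
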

\begin{proof}
By the above lemma, there exists a base $\cL=\fin{L_\al}{\al\in\om_1}$ for $X$, consisting of clopen sets. Using transfinite induction, we will construct an 
$\om_1$-sequence $\big\{\cA_\al\big\}_{\al\in\om_1}$ of collections of nonempty subsets of $X$, with the following properties:
\begin{enumerate}
\item $\fore\al{\om_1}\cA_\al\text{\ is\ a\ (cl)open\ partition\ of\ }X$ not containing the empty set;
\item $\fore\al{\om_1}\forall\al'<\al\colon\cA_{\al}\text{\ is\  a\ refinement\ of\ }\cA_{\al'}$.
\end{enumerate}
\par
Let $\cA_0=\{X\}$. If $\cA_\al$ has been defined for a given $\al\in\om_1$, then let
\[
\cA_{\al+1}=(\fin{A\cap L_\al}{A\in\cA_\al}\cup\fin{A\stm L_\al}{A\in\cA_\al})\stm\{\vuo\}.
\]
Finally, if $\lambda\in\om_1$ is limit and $\cA_\al$ has been defined for $\al<\lambda$, then put
\[
\cA_\lambda=\Bfin{\bigcap_{\al\in\lambda}A_\al}{\fore\al\lambda A_\al\in\cA_\al\ \ \wedge\ \ \fore{\al',\al''}\lambda(\al'<\al''\imp A_{\al'}\supseteq A_{\al''})}\stm\{\vuo\}.
\]
We prove both property (1) and (2) by transfinite induction on $\al\in\om_1$.
As for (1), it is trivial when $\al=0$, and if it holds for a given $\hal\in\om_1$ then it is easily seen to hold also for $\hal+1$. Consider now a nonzero limit $\lambda\in\om_1$.  
Given an arbitrary $x\in X$, take for every $\al<\lambda$ an $A_\al\in\cA_\al$ such that 
$x\in A_\al$; since (by the inductive hypothesis) (2) holds for every $\al<\lambda$, we have that for every $\al'<\al<\lambda$ there exists a $\hat A\in\cA_{\al'}$ such that 
$A_\al\si\hat A$. This implies (as $\cA_{\al'}$, still by the inductive hypothesis, is a partition of $X$) that $A_\al\cap A=\vuo$ for every $A\in\cA_{\al'}\stm\big\{\hat A\big\}$; thus, since $A_{\al'}\in\cA_{\al'}$ and $A_\al\cap A_{\al'}\neq\vuo$ (as $x$ belongs to both sets), we must have that $A_{\al'}=\hat A$, hence $A_\al\si A_{\al'}$. Therefore, since of course $\bigcap_{\al\in\lambda}A_\al\neq\vuo$ (as such a set contains $x$), we have by the definition of $\cA_\lambda$ that $\bigcap_{\al\in\lambda}A_\al\in\cA_\lambda$, so that we have found an element of $\cA_\lambda$ which contains $x$. Therefore, 
$\cA_\lambda$ covers $X$. Since $\vuo\notin\cA_\lambda$ by definition, and every element of $\cA_\lambda$ is open as a countable intersection of open sets in a $P$-space,  
it only remains to show that $A'\cap A''=\vuo$ for distinct $A',A''\in\cA_\lambda$. Thus, suppose $A'=\bigcap_{\al\in\lambda}A'_\al$ and $A''=\bigcap_{\al\in\lambda}A''_\al$, with $A'_\al,A''_\al\in\cA_\al$ for $\al\in\lambda$, and $\al\mapsto A'_\al$ and $\al\mapsto 
A''_\al$ decreasing; of course, the fact that $A'\neq A''$ implies that for at least one $\hal\in\lambda$ we have that $A'_\hal\neq A''_\hal$. Since $\cA_\hal$, by the inductive hypothesis, is a partition, this implies in turn that $A'_\hal\cap A''_\hal=\vuo$; hence $A'\cap A''=\big(\bigcap_{\al\in\lambda}A'_\al\big)\cap\big(\bigcap_{\al\in\lambda}A''_\al
\big)=\vuo$, too.\par
Now we prove (2). Of course, for $\al=0$ there is nothing to prove. If (2) holds for every 
$\al$ less than a successor ordinal $\hal+1\in\om_1$, and $\al^\ast<\hal+1$, then either $\al^\ast=\hal$, in which case it is straightforward to realize from the general definition of 
$\cA_{\al+1}$ that $\cA_{\hal+1}$ is a refinement of $\cA_\hal=\cA_{\al^\ast}$, or $\al^\ast<\hal$, in which case since $\cA_{\hal+1}$ is a refinement of $\cA_\hal$ and 
$\cA_\hal$ (by the inductive hypothesis) is a refinement of $\cA_{\al^\ast}$ we also have that $\cA_{\hal+1}$ is a refinement of $\cA_{\al^\ast}$. Finally, if $\lambda\in\om_1$
is a limit ordinal, and $A$ is an arbitrary element of $\cA_\lambda$, then $A=\bigcap_{\al\in\lambda}A_\al$ with $A_\al\in\cA_\al$ for every $\al<\lambda$; therefore, for every $\al\in\lambda$, $A_\al$ is an element of $\cA_\al$ which include $A$.\par
Let us also point out a further property of the partitions $\cA_\al$ which immediately follows from the general definition of $\cA_{\al+1}$, and which will play a momentous 
r\^ole below:
\[
\fore\al{\om_1}\fore A{\cA_{\al+1}}(A\cap L_\al=\vuo\ \vee\ A\si L_\al).\eqno(\ast)
\]\par
Now, set $T=\fin{(A,\al)}{\al\in\om_1\ \wedge\ A\in\cA_\al}$, and let $\sqsubseteq$ be the binary relation on $T$ defined by
\[
(A',\al')\sqsubseteq(A'',\al'')\Longleftrightarrow(\al'\le\al''\ \wedge\ A'\supseteq A'').
\]
Observe that, since $\sqsubseteq$ is the restriction to $T$ of the product order of the two ordered sets $\big(\wp(X),\supseteq\big)$ and $(\om_1,\le)$, we have by general reasons that $\sqsubseteq$ is a (partial) order on $T$. We will further prove the following:
\vskip0.2cm\noi 
{\bf Fact.} $(T,\sqsubseteq)$ is a tree of height $\om_1$, and for every $\al\in\om_1$ we have that $\text{Lev}_\al(T)=\bfin{(A,\al)}{A\in\cA_\al}$.\par\noi
{\bf Proof.} Let $(\hat A,\hal)$ be an arbitrary element of $T$ (so that $\hat A\in
\cA_\hal$). By property (2), we have that for every $\al<\hal$ there exists an $A_\al\in\cA_\al$ with $\hat A\si A_\al$; since each $\cA_\al$ is a partition and $\hat A$, as an element of $\cA_\hal$, is nonempty, it is apparent that for every $\al<\hal$ the set 
$A_\al$ is the only element of $\cA_\al$ including $\hat A$. Then it easily follows that 
$\bfin{t\in T}{t\sqsubset(\hat A,\hal)}=\fin{(A_\al,\al)}{\al<\hal}$; moreover, by an argument similar to one of those used to prove property (1) when $\al$ is a limit ordinal 
$\lambda$, we may show that for every $\al',\al''$ with $\al'<\al''<\hal$ we have that 
$A_{\al''}\si A_{\al'}$ (actually, we know that $A_{\al''}$ must be included in some element $A'$ of $\cA_{\al'}$, and since $\cA_{\al'}$ is a partition and $A_{\al''}\cap A_{\al'}\supseteq\hat A\neq\vuo$, we conclude that $A'$ necessarily coincides with $A_{\al'}$). Therefore, the above-considered set $\bfin{t\in T}{t\sqsubset(\hat A,\hal)}=\fin{(A_\al,\al)}{\al<\hal}$, endowed with the order induced by $\sqsubseteq$, is similar to the ordinal $\hal$, and this proves at the same time that $(T,\sqsubseteq)$ is a tree and that $(\hat A,\hal)\in\text{Lev}_\hal(T)$. On the other hand, every element of $\text{Lev}_\hal(T)$ must have the second component equal to $\hal$---as for each $(A,\al)\in T$ with 
$\al\neq\hal$ we may prove as above that it belongs to the set $\text{Lev}_\al(T)$, which is disjoint from $\text{Lev}_\hal(T)$. Therefore $\text{Lev}_\hal(T)=\bfin{(A,\hal)}{A\in\cA_\hal}$, and this holds for each $\hal\in\om_1$. Of course, $\om_1$ turns out to be the height of $(T,\sqsubseteq)$.
\vskip0.2cm
Now we finish the proof of the proposition. First of all, observe that $\big\vert\cA_\al\big\vert\le\om_1$ for every $\al\in\om_1$; this is an elementary consequence of the fact 
that each $\cA_\al$ is an open partition of $X$ consisting of nonempty sets, and that $d(X)\le w(X)\le\om_1$. Since $\big\vert\text{Lev}_\al(T)\big\vert=\big\vert\bfin{(A,\al)}{A\in\cA_\al}\big\vert=\big\vert\cA_\al\big\vert\le\om_1$ for every $\al\in\om_1$, to show that 
$(T,\sqsubseteq)$ is a weak Kurepa tree we only have to prove that it has at least $\om_2$ branches.\par
For every $x\in X$ and $\al\in\om_1$, denote by $A_{x,\al}$ the only element of 
$\cA_\al$ containing $x$; notice that, since $x\in A_{x,\al'}\cap A_{x,\al''}\neq\vuo$ for every $\al',\al''\in\om_1$, we may prove as before that $A_{x,\al''}\si A_{x,\al'}$ for $\al'<\al''<\om_1$. Let, for every $x\in X$, $\Pi_x=\bfin{(A_{x,\al},\al)}{\al\in\om_1}$. Then 
$(A_{x,\al'},\al')\sqsubset(A_{x,\al''},\al'')$ for $\al'<\al''<\om_1$, so that $\Pi_x$ is a chain in $T$ intersecting all levels, i.e., $\Pi_x$ is a branch. Therefore, if we can prove that the association $x\mapsto \Pi_x$ is one-to-one, it will follow that in $T$ there are at least 
$\om_2$ branches. Indeed, suppose that $\bx,\by$ are arbitrary distinct elements in $X$; since $\cL$ is a base for $X$, which is T$_1$, we may consider an $\hal\in\om_1$ such that $\bx\in L_\hal$ and $\by\notin L_\hal$. Observe that, from $\bx\in A_{\bx,\hal+1}$, we have the inequality $L_{\hal}\cap A_{\bx,\hal+1}\neq\vuo$, and this implies by $(\ast)$ that $A_{\bx,\hal+1}\si L_\hal$; thus $\by\notin A_{\bx,\hal+1}$ (as $\by\notin L_\hal$), while of course $\by\in A_{\by,\hal+1}$. Then it follows that $A_{\bx,\hal+1}\neq A_{\by,\hal+1}$, and since $(A_{\by,\hal+1},\hal+1)$ is the only element of $\Pi_\by$ having the second component equal to $\hal+1$, we conclude that $(A_{\bx,\hal+1},\hal+1)\notin \Pi_\by$.
Therefore $\Pi_\bx\neq\Pi_\by$ (as the former set contains $A_{\bx,\hal+1}$ while the latter does not). 
\end{proof}

We have looked hard for \emph{small} non weakly Whyburn $P$-spaces in ZFC, but all we have are partial results. Here is another way to construct such an example, provided that the following innocent-looking question can be answered in the positive.

\begin{Quest} 
Does there exist in ZFC a dense-in-itself $P$-space of cardinality $\aleph_1$ every relatively discrete subset of which is closed?
\end{Quest}
 
Let $X$ be such a space and $Y=\omega_1\cup\{p\}$ be the one-point Lindel\"ofication of the discrete space $\omega_1$ and put $Z=X\times Y$. Fix an injective
mapping $f:X\to \omega_1$ and let $A=\{(x,f(x)): x \in
X\}\subseteq Z$. It is easy to realize that $A$ is not closed in
$Z$.  If for some $B \subseteq A$ we have $\overline B\setminus
A\neq\vuo$, then $(x,p)\in \overline B$  for some $x\in X$. Without any
loss of generality, we may assume $x \notin \pi_X(B)$. Since
the set $\pi_X(B)$ is not closed, it cannot be discrete so we may fix some $z\in\pi_X(B)$
which is not isolated in $\pi_X(B)$. It is not difficult
to see that $(z,p) \in \overline{B}$ so $Z$ is not weakly Whyburn.

Before going on to discuss Lindel\"of spaces we would like to recall another problem from \cite{PTTW} about which we know very little. Consider the space $\beta \omega$; under CH this space has character $\omega_1$ so its $\omega$-modification is even Whyburn, but what happens in a model where CH fails? 

\begin{Quest}
(\cite{PTTW}) Is the $\omega$-modification of $\beta \omega$ weakly Whyburn?
\end{Quest}

Problem 3.5 in \cite{PTTW} asks whether every (regular
Lindel\"of)
$P$-space is weakly Whyburn. Koszmider and Tall constructed
(\cite{KT}) a
model of ZFC+CH where there exists a regular Lindel\"of $P$-space of cardinality $\aleph_2$ without Lindel\"of subspaces of size $\omega_1$. Such a space cannot be weakly Whyburn, because of the following easy fact.

\begin{Prop}
Every Lindel\"of weakly Whyburn $P$-space $X$ of uncountable cardinality has a 
Lindel\"of subspace of cardinality $\omega_1$.
\end{Prop}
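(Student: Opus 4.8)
The plan is to exploit the weak Whyburn property to extract a ``thin'' non-closed set whose closure is forced to contain an uncountable Lindel\"of piece. Let $X$ be Lindel\"of weakly Whyburn $P$-space with $|X|\ge\omega_1$. Since $X$ is a $P$-space, every countable subset of $X$ is closed and discrete (a countable set is a $G_\delta$, so ... more simply: in a $P$-space every point is a $P$-point, so a convergent sequence is eventually constant). In particular no countable subset of $X$ can be dense in any open set, and an almost closed $F\to x$ with $F$ countable would give $F\cup\{x\}$ countable non-closed, impossible; hence \emph{every} almost closed convergent set in $X$ is uncountable. This is the key leverage: weak Whyburn plus $P$ forces uncountable almost closed sets.

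First I would fix any subset $A\subseteq X$ with $|A|=\omega_1$ that is not closed --- such an $A$ exists: take any $\omega_1$-sized $D\subseteq X$; if $D$ is closed, since $X$ is Lindel\"of $P$, $D$ is itself Lindel\"of and we are done, so assume $D$ is not closed and put $A=D$. By weak Whyburn there is an almost closed $F\subseteq A$ with $F\to x$ for some $x\in\overline A\setminus A$. Then $F\cup\{x\}$ is a closed subspace of $X$, hence Lindel\"of (closed subspaces of Lindel\"of spaces are Lindel\"of), and by the remark above $|F|=\omega_1$, so $|F\cup\{x\}|=\omega_1$. Thus $F\cup\{x\}$ is the desired Lindel\"of subspace of cardinality $\omega_1$.

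The only point needing a little care --- and the main (minor) obstacle --- is producing a non-closed $A$ of size \emph{exactly} $\omega_1$ to feed into the Whyburn property, since if $|A|$ were larger the resulting $F$ could be larger than $\omega_1$. But this is handled by the dichotomy in the previous paragraph: either some $\omega_1$-sized subset is non-closed (and we run the argument), or every $\omega_1$-sized subset is closed, in which case picking any such subset already yields a Lindel\"of (indeed closed) subspace of size $\omega_1$, since closed subspaces of Lindel\"of spaces are Lindel\"of. Either way we obtain a Lindel\"of subspace of cardinality $\omega_1$, completing the proof. (One should double-check that $F$ can be taken of size exactly $\omega_1$ and not merely $\le\omega_1$: $|F|\le|A|=\omega_1$, and $|F|\ge\omega_1$ because $F$ is an infinite almost closed set in a weak $P$-space, hence cannot be countable; so $|F|=\omega_1$.)
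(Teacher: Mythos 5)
Your proposal is correct and follows essentially the same route as the paper: pick $A$ of size $\omega_1$; either it is closed (done) or weak Whyburnness yields an almost closed $F\subseteq A$ converging to some $x$, which must be uncountable because countable sets are closed in a $P$-space, so $\overline F=F\cup\{x\}$ is the required closed (hence Lindel\"of) subspace of cardinality $\omega_1$. (One phrasing slip: the contradiction when $F$ is countable is that $F$ itself would be a countable non-closed set --- $F\cup\{x\}$ is closed, being $\overline F$ --- but your final parenthetical states the argument correctly.)
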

\begin{proof}
Pick a set $A \subseteq X$ with $\vert A\vert=\omega_1$. If $A$ is closed, then we are done.  Otherwise, there is some almost closed
$B\subseteq A$ converging outside it. Since in a $P$-space countable
sets are closed, the set $B$ must have cardinality $\omega_1$. Thus
$\overline{B}$ is a Lindel\"of subspace of $X$ of cardinality $\omega_1$.
\end{proof}

At least consistently, the assumption that $X$ is a $P$-space can
be dropped in the above proposition.

\begin{Prop}[{[{\sf CH}]}] 
Every Lindel\"of weakly Whyburn space $X$ of uncountable cardinality has a 
Lindel\"of subspace of cardinality $\omega_1$.
\end{Prop}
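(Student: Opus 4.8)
The plan is to run an elementary submodel argument, with CH entering exactly once: to produce a \emph{countably closed} elementary submodel of size $\omega_1$.

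First I would fix a regular cardinal $\theta$ with $X,\tau_X\in H(\theta)$ and, using CH, build $M\prec H(\theta)$ with $X,\tau_X\in M$, $\omega_1\subseteq M$, $|M|=\omega_1$, and $[M]^{\omega}\subseteq M$. The construction is the usual one: write $M=\bigcup_{\alpha<\omega_1}M_\alpha$ as an increasing continuous chain of elementary submodels with $\omega_1\cup\{X,\tau_X\}\subseteq M_0$, $|M_\alpha|=\omega_1$, and $[M_\alpha]^{\omega}\subseteq M_{\alpha+1}$. This is the only place CH is used, namely to keep the cardinality bounded by $\omega_1$, since $|[M_\alpha]^{\omega}|=\omega_1^{\omega}=\mathfrak{c}=\omega_1$ under CH. Countable closure of $M$ then follows from $\mathrm{cf}(\omega_1)>\omega$, because any countable subset of $M$ already lies in some $M_\alpha$, hence belongs to $M_{\alpha+1}\subseteq M$.

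Next set $Y=X\cap M$. Since $|X|\ge\omega_2$, by elementarity $M$ contains a one-to-one function $e\colon\omega_1\to X$; as $\omega_1\subseteq M$ this gives $e[\omega_1]\subseteq Y$, so $|Y|=\omega_1$. I would then split into two cases. If $Y$ is closed in $X$, it is a closed subspace of the Lindel\"of space $X$, hence Lindel\"of, and we are finished. If $Y$ is not closed in $X$, I would apply the weak Whyburn property of $X$ to the non-closed subset $Y$ (no elementarity is needed here, since this is just the defining property of $X$): there exist $x\in\overline Y\setminus Y$ and an almost closed $B\subseteq Y$ with $B\to x$, so $\overline B=B\cup\{x\}$. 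Now $B$ cannot be countable: otherwise $B\in[M]^{\omega}\subseteq M$, whence $\overline B=B\cup\{x\}\in M$ by elementarity (closure in $X$ is definable from $B$ and $\tau_X$); but then $B\cup\{x\}$ is a countable element of $M$, so every element of it, in particular $x$, lies in $M$, contradicting $x\in X\setminus M$. Hence $|B|=\omega_1$, and $\overline B=B\cup\{x\}$ is a closed subspace of $X$ of cardinality $\omega_1$, therefore Lindel\"of.

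The only point requiring care is the bookkeeping with the countably closed model, in particular the step that forces the limit point $x$ into $M$ in the countable case; the rest is routine. It is worth noting that weak Whyburn is used only in the second case and is genuinely needed: by the example of Koszmider and Tall (\cite{KT}) there can consistently be a Lindel\"of $P$-space $X$ of size $\aleph_2$ for which $X\cap M$ is not closed and yet $X$ contains no Lindel\"of subspace of cardinality $\omega_1$, so some hypothesis beyond Lindel\"ofness is unavoidable.
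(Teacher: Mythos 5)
Your proof is correct, but it follows a genuinely different route from the one in the paper. The paper argues by contradiction via an iterated Whyburn closure: if $X$ had no closed subset of size $\omega_1$, then every almost closed subset of a fixed $A\in[X]^{\omega_1}$ converging outside it would be countable, so under CH the Whyburn closure of $A$ would have cardinality $\omega_1^{\omega}=\omega_1$; iterating this operation $\omega_1$ times and using the regularity of $\omega_1$ to capture any countable almost closed set at some stage, the union of the iterates would be a closed set of size $\omega_1$, a contradiction. You instead take a countably closed elementary submodel $M\prec H(\theta)$ of size $\omega_1$ (the only place CH enters for you) and observe that either $Y=X\cap M$ is already closed, or weak Whyburnness yields an almost closed $B\subseteq Y$ whose limit point lies outside $M$, which forces $B$ to be uncountable and $\overline{B}$ to be the required closed (hence Lindel\"of) subspace of size $\omega_1$. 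Both arguments in fact produce a closed subspace of cardinality $\omega_1$, and both use CH only through the cardinal arithmetic $\omega_1^{\omega}=\omega_1$; your method is essentially the elementary-submodel technique of \cite{BT}, which the paper deliberately sidesteps (it derives the Baumgartner--Tall result on sequential spaces as a corollary of its own, submodel-free, argument). Two small points: the hypothesis is only that $|X|$ is uncountable, so ``since $|X|\ge\omega_2$'' should read $|X|\ge\omega_1$ (which already puts an injection $e\colon\omega_1\to X$ into $M$; and if $|X|=\omega_1$ then $X$ itself is the required subspace); and in the countable case one should recall explicitly that a countable element of $M$ is a subset of $M$ because $M$ contains a surjection of $\omega$ onto it --- a standard fact you are clearly relying on.
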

\begin{proof}
We will show that $X$ has a closed subset of size $\omega_1$.
Suppose that
it is not so and let $A \subseteq X$ be a set of size $\omega_1$. Then 
every almost closed set converging outside $A$ is countable, so that the
Whyburn-closure of $A$ has cardinality $\omega_1^\omega=\omega_1$. Let
$A^\alpha$ be the $\alpha$-iterate of the Whyburn closure of $A$.
Then
$B=\bigcup_{\alpha \in \omega_1} A^\alpha$ has cardinality
$\omega_1$; if
this last set were non-closed then we could find some almost
closed $P$
converging outside it. Thus, $P$ would be countable and, by the regularity of 
$\omega_1$, there would be some $\beta \in \omega_1$ such that $P \subseteq \bigcup_{\alpha\in\beta}A^\alpha$. Hence the limit of $P$ would be inside $B$, which is
a contradiction.
\end{proof}

As a corollary we get a proposition estabilished in \cite{BT}
with the aid of elementary submodels:

\begin{Th}[{[{\sf CH}]}] (\cite{BT}) Every Hausdorff Lindel\"of sequential space of uncountable cardinality has a Lindel\"of subspace of cardinality $\omega_1$.
\end{Th}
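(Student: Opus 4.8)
The plan is to derive this corollary directly from the preceding [CH] proposition, which states that every Lindel\"of weakly Whyburn space of uncountable cardinality has a Lindel\"of subspace of cardinality $\omega_1$. Since the target is stated for Hausdorff Lindel\"of \emph{sequential} spaces, the whole argument reduces to observing that a sequential space is weakly Whyburn, so that the general proposition applies verbatim.

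First I would recall (it is stated in the introduction of this paper) that sequential spaces are weakly Whyburn. For completeness one can reprove this in one line: if $A\subseteq X$ is not closed, sequentiality gives a sequence $\{x_n:n\in\omega\}\subseteq A$ converging to some $x\in\overline A\setminus A$; after passing to a subsequence we may assume the $x_n$ are distinct and $x\notin\{x_n:n\in\omega\}$, and then, using that $X$ is Hausdorff (so one-point sets are closed and convergent sequences together with their limit form an essentially nice set), the set $F=\{x_n:n\in\omega\}$ satisfies $\overline F\setminus F=\{x\}$, i.e. $F\to x$; thus $F$ is an almost closed subset of $A$ converging to a point of $\overline A\setminus A$, which is exactly the weak Whyburn property at some point of $\overline A\setminus A$.

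Then I would simply invoke the previous proposition: a Hausdorff Lindel\"of sequential space $X$ of uncountable cardinality is, by the above, a Lindel\"of weakly Whyburn space of uncountable cardinality, so under CH it has a Lindel\"of subspace of cardinality $\omega_1$. This completes the proof.

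There is essentially no obstacle here; the only point requiring a little care is the verification that the subsequence of distinct points with its limit removed is genuinely almost closed in a Hausdorff space — one must make sure the closure of $\{x_n:n\in\omega\}$ adds only the single limit point $x$ and no spurious accumulation points, which follows because any subsequence of $\{x_n\}$ still converges to $x$, so no other point can be a limit of a subset of $\{x_n\}$. All the real work (the elementary-submodel-free inductive Whyburn-closure argument) has already been done in the preceding proposition, so this corollary is immediate.
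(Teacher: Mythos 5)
Your proposal is correct and is exactly the paper's route: the paper states this result as an immediate corollary of the preceding [CH] proposition, using the fact (recorded in the introduction) that sequential spaces are weakly Whyburn. Your extra verification that a convergent injective sequence minus its limit is almost closed in a Hausdorff space is a harmless elaboration of that same step.
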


\begin{Quest}
Is it true in ZFC that every Lindel\"of weakly Whyburn space of uncountable cardinality has a Lindel\"of subspace of cardinality $\omega_1$?
\end{Quest}

This is known to be true in the special case of regular Lindel\"of scattered spaces by a result of Ofelia T. Alas (see \cite[Theorem~10]{BT}).

It is known (see \cite{B}) that compact weakly Whyburn spaces are pseudoradial. We don't know whether the same holds for Lindel\"of weakly Whyburn $P$-spaces, but we  prove the following partial result.

\begin{Th}
Every regular Lindel\"of weakly Whyburn $P$-space of
pseudocharacter less
than $\aleph_\omega$ is pseudoradial.
\end{Th}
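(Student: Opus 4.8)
**Proof plan for: Every regular Lindelöf weakly Whyburn $P$-space of pseudocharacter less than $\aleph_\omega$ is pseudoradial.**

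The plan is to argue by induction on the pseudocharacter, reducing the problem to showing that a non-closed set $A$ has a convergent transfinite sequence hitting some point of $\overline A\setminus A$. First I would fix such an $A$ and, using the weak Whyburn property, extract an almost closed $F\subseteq A$ with $F\to x$. The whole difficulty is then localized: I must show that $F$ (equivalently, its closure, which by the $P$-space property and Lindelöfness is a Lindelöf regular $P$-space of pseudocharacter less than $\aleph_\omega$) contains a transfinite sequence converging to $x$. Since in a $P$-space every countable set is closed, $F$ has size $\ge\aleph_1$; replacing $X$ by $\overline F$, I may assume $X$ is Lindelöf, $F$ is dense in $X$, and $\overline F\setminus F=\{x\}$, so in particular $x$ is the unique non-isolated... no — rather, $x$ is a point whose every neighborhood meets $F$ while $F$ itself is closed off $x$.

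The key is to exploit $\psi(x)<\aleph_\omega$, say $\psi(x)=\aleph_n$ for some finite $n$, together with the $P$-space property. Fix a family $\{U_\alpha:\alpha<\aleph_n\}$ of open neighborhoods of $x$ with $\bigcap_\alpha U_\alpha=\{x\}$. Because $X$ is a $P$-space, I can close this family under countable intersections and assume it is decreasing and closed under countable infima, and (by regularity and $0$-dimensionality, which holds for regular $P$-spaces as noted in Lemma~\ref{1wkt}) that each $U_\alpha$ is clopen. Now build a transfinite sequence $\{x_\alpha:\alpha<\aleph_n\}\subseteq F\setminus\{x\}$ by choosing $x_\alpha\in U_\alpha\cap F$, $x_\alpha\ne x$ (possible since $x\in\overline F$ and $U_\alpha\setminus\{x\}$ is a neighborhood... careful: need $x\in\overline{F\setminus\{x\}}$, which follows since $F$ is almost closed converging to $x$, so $x$ is not isolated in $F\cup\{x\}$). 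I claim $x_\alpha\to x$: any basic neighborhood of $x$ contains some $U_\beta$ (by the pseudobase property after closing under countable intersections, using that a decreasing $\aleph_n$-sequence of clopen sets with intersection $\{x\}$ is in fact a local base at $x$ in a $P$-space whose pseudocharacter at $x$ is exactly $\aleph_n$ — this last point needs the regularity/$0$-dimensionality and an argument that character equals pseudocharacter here), and then $x_\gamma\in U_\gamma\subseteq U_\beta$ for all $\gamma\ge\beta$.

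The main obstacle I anticipate is precisely the claim that in a regular (hence $0$-dimensional) Lindelöf $P$-space, a point of pseudocharacter $\aleph_n$ has character $\aleph_n$ — i.e. that a well-ordered decreasing sequence of clopen neighborhoods with trivial intersection forms a neighborhood base. The $P$-space property handles countable intersections, but one must rule out the possibility that some open $W\ni x$ contains no $U_\beta$ despite $\bigcap U_\alpha=\{x\}$; here Lindelöfness of the clopen complement $X\setminus W$, covered by the clopen sets $X\setminus U_\alpha$, should force $X\setminus W\subseteq X\setminus U_\beta$ for a single $\beta$ (a countable subcover together with the $P$-space property, since the $U_\alpha$ may be taken decreasing and closed under countable intersections). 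Once this "character $=$ pseudocharacter at $x$" lemma is in hand, the construction above produces the required convergent $\aleph_n$-sequence, and since $\aleph_n<\aleph_\omega$ was the pseudocharacter, this witnesses pseudoradiality of $X$ at $x$, completing the proof.
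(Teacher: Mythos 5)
Your overall strategy matches the paper's: use weak Whyburnness to get an almost closed $B\subseteq A$ with $\overline B\setminus A=\{x\}$, then use zero-dimensionality to fix a family of clopen sets in $\overline B$ with intersection $\{x\}$ and extract a convergent transfinite sequence via Lindel\"ofness. But there is a genuine gap at the step where you ``assume the family $\{U_\alpha:\alpha<\aleph_n\}$ is decreasing.'' Closing under countable intersections does not make a family of size $\aleph_n$ into a decreasing chain: the natural candidates $V_\gamma=\bigcap_{\alpha<\gamma}U_\alpha$ are open only for countable $\gamma$ (the $P$-space property gives nothing for uncountable intersections), so for $n\ge 2$ you cannot produce a well-ordered decreasing local pseudobase of \emph{open} sets of length $\aleph_n$. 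Without the decreasing property, knowing that $W\supseteq\bigcap_k U_{\alpha_k}$ for some countable subfamily tells you nothing about a tail of your points $x_\gamma\in U_\gamma\cap F$. Your flagged ``main obstacle'' conflates two different statements --- that character equals pseudocharacter, and that a decreasing clopen pseudobase is a base --- and your proposed resolution of it already presupposes the family is decreasing, so it is circular.

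The paper circumvents this as follows: take the clopen family $\{U_\alpha:\alpha<\kappa\}$ with $\bigcap_\alpha U_\alpha=\{x\}$ of \emph{minimal} cardinality $\kappa$, and pick $x_\gamma\in\bigcap_{\alpha<\gamma}U_\alpha\setminus\{x\}$. This partial intersection need not be open, but it is nonempty precisely because $|\gamma|<\kappa$ and $\kappa$ is minimal; and $x_\gamma\in\overline B\setminus\{x\}\subseteq A$. Convergence then follows because $\overline B\setminus W$ is Lindel\"of and covered by the $\overline B\setminus U_\alpha$, so a countable subfamily suffices, and --- this is exactly where the hypothesis $\psi(x)<\aleph_\omega$ enters --- $\kappa$ is an uncountable cardinal below $\aleph_\omega$, hence \emph{regular}, so the countable index set is bounded by some $\beta<\kappa$ and every $x_\gamma$ with $\gamma\ge\beta$ lies in $\bigcap_{\alpha<\beta}U_\alpha\subseteq W$. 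Your write-up never invokes minimality and never isolates the role of regularity of $\kappa$ (which is the only reason $\aleph_\omega$ appears in the statement), and these are the two ingredients needed to close the gap.
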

\begin{proof}
Let $A \subseteq X$ be a non closed set and $B \subseteq A$ such that
$\overline{B} \setminus A=\{x\}$ for some $x \in X$. Every
regular
$P$-space is zero-dimensional, so let $\mathcal{U}=\{U_\alpha:
\alpha <
\kappa \}$ be a family of clopen sets in $\overline{B}$, 
such that
$\bigcap_{\alpha \in \kappa} U_\alpha=\{x\}$, and having the minimal cardinality among all families with the previous properties. Since we are in a
$P$-space
and $x$ is an accumulation point of $B$, the cardinal $\kappa$ must be
uncountable. Use minimality of $\mathcal{U}$ to pick $x_\gamma
\in
\bigcap_{\alpha <\gamma} U_\alpha \setminus \{x\}$ for each
$\gamma \in
\kappa$. Since $\overline{B}$ is Lindel\"of and $\kappa$ is a
regular
uncountable cardinal, the sequence $\{x_\gamma : \gamma<\kappa\}
\subseteq A$
converges to $x$.
\end{proof}

\begin{Quest}
Does there exist a Lindel\"of weakly Whyburn non-pseudoradial $P$-space?
\end{Quest}

A negative answer to the above question would exhibit an interesting pathology in the class of Lindel\"of $P$-spaces. We now suggest a possible way to look for a counterexample. Kunen has constructed a compact space $X$ and a point $p \in X$ with the following properties:

\begin{enumerate}
\item{$\chi(p, X)>\omega$.}
\item{No sequence of uncountable regular length converges to $p$ in $X$.}
\end{enumerate}

Suppose that a compact \emph{scattered} space $Y$ having the above properties exists, then it is easy to construct a weakly Whyburn non pseudoradial Lindelof $P$-space. Indeed, since $Y$ is compact scattered its omega-modification $Y_\delta$ is a Lindel\"of $P$-space (see \cite{A}, Lemma II.7.14). Since $Y_\delta$ is regular and scattered it is weakly Whyburn. By 2) no sequence will converge to $p$ in $Y_\delta$. (Since character equals pseudocharacter in compact spaces 1) will guarantee that $p$ is not isolated in $Y_\delta$). 

\section{Product of weakly Whyburn $P$-spaces}

The behaviour of the weakly Whyburn property under the product operation is not very clear. The first listed author has proved that the product of a compact semiradial space and a compact weakly Whyburn space is weakly Whyburn but we are not aware of a non weakly Whyburn product of compact weakly Whyburn spaces. We don't know whether the same holds replacing compact space with Lindel\"of $P$-space but at least we prove that the product of Lindel\"of weakly Whyburn $P$-space and a Lindel\"of Whyburn $P$-space is weakly Whyburn (see Corollary~\ref{uno}).

Recall that a space has countable extent if each of its closed discrete subsets is countable. Every Lindel\"of space has countable extent. So the next lemma is a strengthening of Theorem 2.5 in \cite{PTTW}.   

\begin{Lemma} \label{lem: extent}
If $X$ is a Whyburn $P$-space of countable extent, then $X$ is a
radial space of radial character $\aleph_1$.
\end{Lemma}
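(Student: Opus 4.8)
The plan is to show directly that $X$ is radial with radial character at most $\aleph_1$; that is, for every $A\subseteq X$ and every $x\in\overline A\setminus A$ there is a transfinite sequence in $A$ of length at most $\omega_1$ converging to $x$. Fix such $A$ and $x$. Since $X$ is Whyburn, there is an almost closed $F\subseteq A$ with $F\to x$, i.e.\ $\overline F\setminus F=\{x\}$. Working inside the closed subspace $\overline F$, which is again a Whyburn $P$-space of countable extent, we have reduced to the case where $A=F$ is almost closed converging to $x$ and $\overline A=A\cup\{x\}$. Now use that every regular (hence every) $P$-space here is zero-dimensional: pick a family $\mathcal U=\{U_\alpha:\alpha<\kappa\}$ of clopen neighbourhoods of $x$ in $\overline A$ with $\bigcap_{\alpha<\kappa}U_\alpha=\{x\}$ of minimal cardinality $\kappa$ (this exists because $\{x\}$ is closed and $\overline A$ has a clopen base, so $\{x\}$ is an intersection of clopen sets). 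Since $\overline A$ is a $P$-space and $x$ is not isolated in $\overline A$ (as $x\in\overline A\setminus A$ means $x$ is a limit point), $\kappa$ must be uncountable.

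The key point will be that $\kappa=\omega_1$, and for this I expect to use countable extent rather than Lindel\"ofness. Using minimality, for each $\gamma<\kappa$ choose $x_\gamma\in\bigcap_{\alpha<\gamma}U_\alpha\setminus\{x\}$; note $x_\gamma\in A$ since the only point of $\overline A$ outside $A$ is $x$. This sequence $\{x_\gamma:\gamma<\kappa\}$ converges to $x$ in the following sense: any basic clopen neighbourhood of $x$ in $\overline A$ contains some $\bigcap_{\alpha<\delta}U_\alpha$ (by minimality of $\mathcal U$, every neighbourhood of $x$ must contain a finite, hence by the $P$-space property a countable, hence an initial-segment intersection — this needs care, and is the delicate step), and then contains all $x_\gamma$ with $\gamma\ge\delta$. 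The main obstacle is justifying that $\kappa\le\omega_1$: one argues that if $\kappa\ge\omega_2$ then the set $D=\{x_\gamma:\gamma<\omega_1\}$ (or a suitable subset) would be a closed discrete subspace of $\overline A$ of size $\omega_1$, contradicting countable extent. Concretely, since $X$ is a $P$-space the set $D$ is an $\omega_1$-sized set each of whose points $x_\gamma$ has the clopen neighbourhood $U_\gamma\setminus\bigcap_{\alpha\le\gamma}U_\alpha$-type separator isolating it from $\{x_\delta:\delta>\gamma\}$, and $x\notin\overline D$ because some $U_\alpha$ with $\alpha<\omega_1$ misses cofinally many of them — pinning down that $D$ is genuinely closed discrete (not just relatively discrete) is where the countable-extent hypothesis is spent.

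Once $\kappa\le\omega_1$ is established, combined with $\kappa$ uncountable we get $\kappa=\omega_1$, and the sequence $\{x_\gamma:\gamma<\omega_1\}\subseteq A$ converges to $x$, witnessing that $X$ is radial of radial character $\aleph_1$. I would present the argument in this order: (i) reduce to the almost-closed case via the Whyburn property and pass to $\overline F$; (ii) extract the minimal clopen family $\mathcal U$ and observe $\kappa>\omega$; (iii) build the sequence $\{x_\gamma\}$ by the minimality trick; (iv) prove convergence using the $P$-space property to reduce arbitrary neighbourhoods of $x$ to initial-segment intersections; (v) prove $\kappa\le\omega_1$ by constructing a closed discrete set of size $\kappa$ (or of size $\omega_1$ if $\kappa$ were larger) and invoking countable extent. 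Step (v), and to a lesser extent step (iv), are the substantive ones; the rest is bookkeeping with the definitions of Whyburn, $P$-space, and zero-dimensionality.
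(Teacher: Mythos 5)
Your strategy is modelled on the pseudocharacter argument the paper uses later for Lindel\"of weakly Whyburn $P$-spaces, but the two steps you yourself flag as delicate, (iv) and (v), genuinely fail here, and (v) rests on a false claim. Consider the one-point Lindel\"ofication $L$ of a discrete set of size $\aleph_2$ (neighbourhoods of the extra point $p$ are the co-countable sets containing it). This is a zero-dimensional Whyburn $P$-space of countable extent, yet the minimal cardinality of a family of clopen neighbourhoods of $p$ with intersection $\{p\}$ is $\aleph_2$, since the complements are countable and must cover a set of size $\aleph_2$. So $\kappa\le\omega_1$ is simply not true, and no closed-discrete-set argument can rescue it; indeed the set $D=\{x_\gamma:\gamma<\omega_1\}$ you propose has no reason to be discrete (nothing forces $x_\gamma\notin U_\gamma$) or closed. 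Step (iv) has a similar defect: minimality of the \emph{cardinality} of $\mathcal U$ does not imply that an arbitrary neighbourhood $V$ of $x$ contains a countable subintersection $\bigcap_{\alpha\in S}U_\alpha$; in the paper's later theorem on pseudoradiality that is extracted from Lindel\"ofness of $\overline B$ (the closed set $\overline B\setminus V$ is Lindel\"of and is covered by the complements of the $U_\alpha$) together with regularity of $\kappa$, guaranteed there by the hypothesis that the pseudocharacter is below $\aleph_\omega$. Neither ingredient is available under countable extent alone.

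The paper's proof goes a different way and never touches pseudocharacter. After fixing an almost closed $P\subseteq A$ converging to $x$, it takes a \emph{maximal disjoint family} $\gamma$ of open subsets of $P$ whose closures miss $x$, applies the Whyburn property a second time to $\bigcup\gamma$ to obtain an almost closed $Q\subseteq\bigcup\gamma$ with $x\in\overline Q$, and picks one point from each of the (uncountably many, by the $P$-property) members of $\gamma$ that meet $Q$. The resulting set $B$ is discrete and closed in $Q$ \emph{by construction}, so $B\cup\{x\}$ is a closed subspace with unique non-isolated point $x$; countable extent then forces every neighbourhood of $x$ to meet $B$ in a co-countable set, and a subset of $B$ of size $\aleph_1$, enumerated in type $\omega_1$, is the desired convergent sequence. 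The moral is that countable extent must be spent on a set that is already relatively discrete, not on bounding the pseudocharacter of $x$, which can be arbitrarily large. A secondary point: your parenthetical ``(hence every)'' asserting that all Hausdorff $P$-spaces are zero-dimensional is also unjustified, though that is not where the proof founders.
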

\begin{proof}Assume that  $A\subseteq X$  and  $x
\in \overline{A} \setminus A$. Fix an almost closed $P\subseteq 
A$ such  that  $x \in \overline{P}$. Take a maximal disjoint
family $\gamma$ of open subsets of $P$ whose closures do not
contain $x$.  Then $x \in \overline {\bigcup \gamma}$ and hence
there is an  almost closed $Q \subseteq \bigcup \gamma$ such 
that  $x \in \overline{Q}$. Let $\gamma'=\{U \in \gamma: U \cap Q
\neq \emptyset \}$. Since $X$ is a $P$-space, it is easy  to  see
that $\gamma'$ is  uncountable. For each $U \in \gamma'$ take an
$x_U \in U \cap Q$. The set $B=\{x_U: U \in \gamma'\}$  is 
discrete and $(\overline{B} \setminus B) \cap  (\bigcup
\gamma)=\emptyset$. This means $B$ is closed in $Q$ and therefore
$x$ has to be the only accumulation point of $B$ in the space $Q
\cup \{x\}$. $B \cup\{x\}$ is an uncountable space with the unique
non-isolated point $x$. Being of countable extent, such a space
must be Lindel\"of and of cardinality $\aleph_1$. Hence $B$ is a sequence of length 
$\aleph_1$ which converges to $x$. 
\end{proof}

\begin{Prop}
A $P$-space $X$ of cardinality $\aleph_1$ and countable extent is
Lindel\"of and radial.
\end{Prop}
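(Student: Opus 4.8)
The plan is to establish the Lindel\"of property by a direct transfinite argument that exploits the $P$-space hypothesis through the fact that countable sets are closed, and then to obtain radiality by combining Lemma~\ref{lem: extent} with the remark of \cite{PTTW} quoted above, namely that a Lindel\"of $P$-space of cardinality $\aleph_1$ is Whyburn.

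First I would show $X$ is Lindel\"of, arguing by contradiction. Suppose $\mathcal U$ is an open cover of $X$ with no countable subcover. Choosing one member of $\mathcal U$ around each of the $\aleph_1$ points of $X$, we may assume $\mathcal U=\{U_\alpha:\alpha<\omega_1\}$; discarding each $U_\alpha$ contained in $\bigcup_{\beta<\alpha}U_\beta$ and re-indexing the rest in increasing order (a routine induction on $\alpha$ shows the surviving sets still cover $X$, and there are still uncountably many of them, as otherwise we would already have a countable subcover), we may in addition pick points $x_\alpha\in U_\alpha\setminus\bigcup_{\beta<\alpha}U_\beta$ for every $\alpha<\omega_1$. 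Set $D=\{x_\alpha:\alpha<\omega_1\}$. Then the $x_\alpha$ are pairwise distinct and $x_\gamma\notin U_\alpha$ whenever $\gamma>\alpha$, so $U_\alpha\cap D\subseteq\{x_\beta:\beta\le\alpha\}$ is countable for each $\alpha$.

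The crucial point --- and the step I expect to carry the weight of the proof --- is that $D$ is closed and discrete in $X$; here one uses that in a $P$-space every countable set is closed. If $x\in\overline D$, fix $\alpha$ with $x\in U_\alpha$; the set $\{x_\beta:\beta\le\alpha\}\setminus\{x\}$ is countable, hence closed, so $U_\alpha\setminus(\{x_\beta:\beta\le\alpha\}\setminus\{x\})$ is an open neighbourhood of $x$ whose intersection with $D$ is contained in $\{x\}$, forcing $x\in D$; thus $D$ is closed. Likewise, $U_\alpha\setminus\{x_\beta:\beta<\alpha\}$ is an open neighbourhood of $x_\alpha$ meeting $D$ only at $x_\alpha$, so $D$ is discrete. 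Hence $D$ is a closed discrete subset of cardinality $\aleph_1$, contradicting countable extent, and therefore $X$ is Lindel\"of.

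It then remains only to assemble the known pieces: since $X$ is now a Lindel\"of $P$-space of cardinality $\aleph_1$, it is Whyburn by the remark of \cite{PTTW}; and being a Whyburn $P$-space of countable extent, it is radial --- indeed of radial character $\aleph_1$ --- by Lemma~\ref{lem: extent}. The only delicate point is the closedness of $D$ in the Lindel\"of step, which genuinely depends on countable sets being closed, precisely the feature unavailable for arbitrary spaces of countable extent and cardinality $\aleph_1$; this is where the $P$-space assumption is really needed in the first half of the statement.
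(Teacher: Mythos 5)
Your proof is correct, and the Lindel\"of half is essentially identical to the paper's: the same reduction of the cover to a family $\{U_\alpha:\alpha<\omega_1\}$ with $U_\alpha\not\subseteq\bigcup_{\beta<\alpha}U_\beta$, the same choice of $x_\alpha$, and the same use of the fact that countable sets are closed in a $P$-space to see that $D$ is closed and discrete, contradicting countable extent. The only divergence is in the second half. You obtain the Whyburn property by citing the remark from \cite{PTTW} (quoted verbatim a few paragraphs earlier in the paper) that every Lindel\"of $P$-space of cardinality $\aleph_1$ is Whyburn, and then apply Lemma~\ref{lem: extent}; the paper instead reproves that remark in situ: it enumerates $X\setminus\{a\}$ as $\{x_\alpha:\alpha<\omega_1\}$, builds a decreasing $\omega_1$-chain of open neighbourhoods $V_\alpha$ of $a$ with $\overline{V_\alpha}$ missing $\{x_{\alpha'}:\alpha'\le\alpha\}$ (using that a Hausdorff Lindel\"of $P$-space is regular), picks $y_\alpha\in V_\alpha\cap M$, and checks directly that $L=\{y_\alpha:\alpha<\omega_1\}$ is almost closed and converges to $a$, the convergence step using Lindel\"ofness of the closed set $X\setminus W$. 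Both routes then finish with Lemma~\ref{lem: extent}. Your version is shorter and perfectly legitimate given that the cited fact is explicitly on record; the paper's version buys self-containedness and makes visible exactly where regularity and Lindel\"ofness of closed subsets enter. No gap either way.
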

\begin{proof}
We first prove that $X$ is Lindel\"of. Toward a contradiction, suppose there exists an open cover $\cA$ of $X$ with no countable subcover. Of course, since $\vert X\vert=\om_1$, we may assume that $\vert\cA\vert=\om_1$. Also, since $\cA$ has no countable subcover, we can further assume (up to passing to a suitable subcover) that $\cA$ is indexed as  
$\fin{A_\al}{\al\in\om_1}$, in such a way that  
\[
\forall\al\in\om_1\colon A_\al\stm\bigcup_{\al'<\al}A_{\al'}\neq\vuo.
\]
For every $\al\in\om_1$, pick an $x_\al\in A_\al\stm\bigcup_{\al'<\al}A_{\al'}$; we claim that the set $D=\fin{x_\al}{\al\in\om_1}$ is closed and discrete, and this will contradict the fact that $X$ has countable extent---as clearly $\al\mapsto x_\al$ is one-to-one.\par
To prove the discreteness of $D$, note that for every $\al\in\om_1$ the set $A_\al$ is a neighbourhood of $x_\al$ missing all points $x_{\al'}$ with $\al'>\al$; also, since the set
$C_\al=\fin{x_{\al'}}{\al'<\al}$ is closed (because $X$ is a $P$-space), we see that $A_\al\stm C_\al$ is an open neighbourhood of $x_\al$ whose intersection with $D$ gives $\{x_\al\}$. Suppose now $\hx\in X\stm D$; then $\hx\in A_\hal$ for some $\hal\in\om_1$. Arguing essentially as before, we see that $A_\hal\stm\fin{x_{\al'}}{\al'\le\hal}$ is an open neighbourhood of $\hx$ missing $D$; thus $D$ is closed.\par
Notice that $X$ is Hausdorff and, being a Lindel\"of $P$-space, it is also regular. 
To prove that $X$ is radial, due to Lemma~\ref{lem: extent} it will suffice to show that 
$X$ has Whyburn property. Let $a\in X$ and $M\si X$ be such that $a\in\overline M\stm M$; index $X\stm\{a\}$ as $\fin{x_\al}{\al\in\om_1}$ and consider a decreasing 
$\om_1$-sequence $\{V_\al\}_{\al\in\om_1}$ of open neighbourhoods of $a$ such that 
\begin{equation}\label{val}
\fore\al{\om_1}\ \overline{V_\al}\cap\fin{x_{\al'}}{\al'\le\al}=\vuo;
\end{equation}
let also $y_\al$ be an element of $V_\al\cap M$ for every $\al\in\om_1$, and set $L=\fin{y_\al}{\al\in\om_1}$. We claim that $L$ is an almost-closed set converging to $a$. Indeed, on the one hand if $z$ is an arbitrary element of $X\stm(L\cup\{a\})$ then $z=x_\hal$ for some $\hal\in\om_1$, so that $z\in X\stm\overline{V_\hal}$; since $z\notin L$, it also follows that $U=X\stm(\fin{y_\al}{\al<\hal}\cup\overline{V_\hal})$ is an open neighbourhood of $z$, and $U$ is disjoint from $L$ as $L=\fin{y_\al}{\al\in\om_1}\si\fin{y_\al}{\al<\hal}\cup\overline{V_\hal}$. On the other hand, to show that $a\in\overline L$, consider an arbitrary neighbourhood $W$ of $a$; clearly, if we can prove that $V_\al\si
W$ for some $\al\in\om_1$, then the inequality $W\cap L\neq\vuo$ will follow. 
Actually, consider the closed subset $T=X\stm W$ of $X$, and let $\cA=\fin{X\stm\overline{V_\al}}{\al\in\om_1}$. We see that $\cA$ covers $T$, as every element of $T$ must be $x_\al$ for some $\al\in\om_1$, and $x_\al\in X\stm\overline{V_\al}$ by \eqref{val}. Since $T$ is Lindel\"of, there must exist a countable subset $A$ of $\om_1$ such that $X\stm W\si\bigcup_{\al\in A}(X\stm\overline{V_\al})=X\stm\bigcap_{\al\in A}\overline{V_\al}$; using the decreasing characters of the neighbourhoods $V_\al$, it also follows that $X\stm W\si X\stm\overline{V_\hal}$---where $\hal=\sup A$---whence $V_\hal\si\overline{V_\hal}\si W$. 
\end{proof}

\begin{Th} \label{th: prod}
The product of a  pseudoradial $P$-space $X$ of radial character $\omega_1$ with a weakly Whyburn $P$-space $Y$ of countable extent is weakly Whyburn.
\end{Th}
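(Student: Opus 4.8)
The plan is to argue directly from the definition of weak Whyburn, exploiting the fact that in a $P$-space every countable set is closed (so any almost closed set converging to a point outside itself must be uncountable). Let $A \subseteq X \times Y$ be a non-closed set and fix a point $(x_0, y_0) \in \overline{A} \setminus A$. I want to produce an almost closed $F \subseteq A$ with $F \to (x_0, y_0)$. First I would consider the slices: for $x \in X$ set $A_x = \{y \in Y : (x,y) \in A\}$. The key dichotomy is whether $(x_0, y_0)$ is already ``captured'' by a single slice, i.e. whether $y_0 \in \overline{A_{x_0}}$ in $Y$. If so, since $Y$ is weakly Whyburn and we may have to move to a cofinal point --- actually we need $y_0 \in \overline{A_{x_0}} \setminus A_{x_0}$ or $(x_0,y_0) \in \{x_0\} \times \overline{A_{x_0}}$ with $(x_0,y_0)\notin A$ --- one extracts from the weak Whyburn property of $Y$ (applied to $A_{x_0}$, if it is non-closed in $Y$) an almost closed $G \subseteq A_{x_0}$ with $G \to y_0$, and then checks that $\{x_0\} \times G$ is almost closed in $X \times Y$ converging to $(x_0, y_0)$; this uses that $\{x_0\}$ is closed (a $P$-space is $T_1$) so the closure of $\{x_0\}\times G$ stays inside $\{x_0\}\times Y$.

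The substantive case is when no single slice accumulates to $(x_0, y_0)$: then $y_0 \notin \overline{A_x}$ for the relevant $x$'s, and the accumulation is genuinely two-dimensional. Here I would use the $P$-space structure of $X$ together with the fact that $X$ has radial character... wait, $X$ here is the pseudoradial factor of radial character $\omega_1$. So I would look at the projection. Let $B = \pi_X(A) \subseteq X$. If $x_0 \notin \overline{B}$ we get an easy contradiction with $(x_0,y_0) \in \overline A$. If $x_0 \in B$, say $(x_0, y_1) \in A$, we are thrown into a slice-type analysis again; the genuinely new situation is $x_0 \in \overline{B} \setminus B$. Now invoke that $X$ is pseudoradial of radial character $\omega_1$: there is an $\omega_1$-sequence (or shorter) $\{x_\alpha : \alpha \in \omega_1\} \subseteq B$ with $x_\alpha \to x_0$. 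For each $\alpha$ pick $y_\alpha$ with $(x_\alpha, y_\alpha) \in A$. The goal is to thin this transfinite sequence so that the $y_\alpha$'s are controlled --- ideally converging to $y_0$ in $Y$, or converging within a single small almost closed piece of $Y$ after passing to a cofinal subset. Since $Y$ has countable extent, the set $\{y_\alpha : \alpha \in \omega_1\}$ cannot be closed discrete, so it has an accumulation point $y^*$ in $Y$; if $y^* = y_0$ (which one forces by the assumption that no slice works, hence the first-coordinate motion drags the second coordinate toward $y_0$), then a cofinal subset of $\{(x_\alpha, y_\alpha)\}$ has $(x_0, y_0)$ in its closure, and one uses the weak Whyburn property of $Y$ on $\{y_\alpha : \alpha\in\omega_1\}$ plus closedness of countable sets in $X$ to carve out an almost closed $F$.

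The main obstacle I expect is precisely the bookkeeping in this last step: one must simultaneously (i) keep the first coordinates converging to $x_0$, which wants a cofinal (hence uncountable) subset of the $x_\alpha$, and (ii) keep the second coordinates almost closed converging to $y_0$, which wants to invoke weak Whyburn in $Y$ and may force passing to a subset that is not obviously cofinal in the $X$-direction. Reconciling these --- plausibly by first fixing a cofinal $S \subseteq \omega_1$ making $x_0 \in \overline{\{x_\alpha : \alpha \in S\}}$, then applying weak Whyburn in $Y$ to $\{y_\alpha : \alpha \in S\}$ to get $T \subseteq S$ with $\{y_\alpha : \alpha \in T\}$ almost closed onto $y_0$, and finally checking $\{(x_\alpha, y_\alpha) : \alpha \in T\}$ is almost closed onto $(x_0, y_0)$ using that its ``stray'' limit points would project to limit points of $\{x_\alpha : \alpha \in T\}$ in $X$ (handled by the $P$-space/countable-set-closed property) or to limit points of $\{y_\alpha : \alpha \in T\}$ in $Y$ (handled by the almost closedness just arranged) --- is where the radial-character-$\omega_1$ hypothesis on $X$ and the countable-extent hypothesis on $Y$ both get used, and where a careless argument would leave a gap. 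I would also need the elementary fact, provable from $P$-space-ness, that a transfinite sequence of length $\omega_1$ with all countable initial segments closed converges to a point iff it is eventually inside every neighbourhood, to nail down the ``almost closed'' verification.
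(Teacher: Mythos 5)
Your outline follows the same skeleton as the paper's proof (slice dichotomy, project to $X$, use pseudoradiality to get an $\omega_1$-sequence, lift, thin the second coordinates), but there are three genuine gaps. The most serious one stems from your insisting on converging to the \emph{originally chosen} point $(x_0,y_0)$. Weak Whyburnness only requires an almost closed subset of $A$ converging to \emph{some} point of $\overline{A}\setminus A$, and the hypotheses only deliver that much: pseudoradiality of $X$ gives a sequence in $\pi_X(A)$ converging to some $x^\ast\notin\pi_X(A)$, not to $x_0$, and there is no reason whatsoever that the accumulation point $y^\ast$ of $\{y_\alpha\}$ should equal $y_0$ --- your parenthetical ``which one forces by the assumption that no slice works, hence the first-coordinate motion drags the second coordinate toward $y_0$'' is unjustified and false in general. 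The correct move, which the paper makes, is to give up on $(x_0,y_0)$: once $x_\alpha\to x^\ast\notin\pi_X(A)$, \emph{any} limit $(x^\ast,y^\ast)$ of the lifted, thinned sequence is automatically outside $A$, and that suffices. A second gap: ``since $Y$ has countable extent, $\{y_\alpha:\alpha\in\omega_1\}$ cannot be closed discrete'' fails when that set is countable --- countable extent forbids \emph{uncountable} closed discrete sets, while in a $P$-space every countable set is closed and discrete. You must treat the countable case separately (pass to an uncountable $S$ on which $\alpha\mapsto y_\alpha$ is constant); only in the uncountable case do countable extent and the weak Whyburn property of $Y$ come into play.

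The third gap is the one you yourself flag as ``the main obstacle'' but do not close. Having $\{y_\alpha:\alpha\in T\}$ merely \emph{almost closed} is not enough to make $B=\{(x_\alpha,y_\alpha):\alpha\in T\}$ almost closed in the product: for instance, if uncountably many $y_\alpha$ with $\alpha\in T$ coincide with a single $c$ in that set, then $(x^\ast,c)$ is a stray limit point of $B$ in addition to $(x^\ast,y^\ast)$, and $B$ is not almost closed. What is needed is a genuinely \emph{convergent} $\omega_1$-subsequence of the $y_\alpha$'s indexed by an uncountable (hence cofinal) $T$; the paper obtains this from its Lemma~\ref{lem: extent} (a Whyburn $P$-space of countable extent is radial of radial character $\aleph_1$), applied to the closure of the almost closed set $C$, which is a Whyburn $P$-space of countable extent. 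Once both coordinates form convergent $\omega_1$-sequences, $B$ is a convergent $\omega_1$-sequence in the product $P$-space, and such a sequence is almost closed because its countable initial segments are closed --- this is the one elementary fact you state correctly at the end, but it only applies after the convergence of the second coordinates has actually been secured.
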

\begin{proof}
Let $A \subseteq X\times Y$ be a non-closed subset. Take
$(x,y) \in \overline {A} \setminus A$.
If $A \cap(\{x\}\times Y)$ were non-closed, we could
immediately apply the  weak Whyburn property of $Y$ to find a set
$B\subseteq A$ satisfying $\overline{B}\setminus A=\{(x,y)\}$.
So,  we may assume that $A \cap(\{x\}\times Y)$ is closed.
By passing to a suitable subset, we may further assume that
$A \cap(\{x\}\times Y)= \emptyset$.
Since $x \in \overline {\pi_X (A)}$,
it follows that $\pi_X (A)$ is not closed in $X$.
So, there is a convergent sequence $\{x_\alpha : \alpha  \in
\omega_1\} \subseteq
\pi_X (A)$
with limit outside $\pi_X (A)$.
For each $\alpha  \in \omega_1$ take a point $y_\alpha  \in Y$ in
such a way
that
$(x_\alpha , y_\alpha ) \in A$.

\noindent Case 1: The set $\{y_\alpha  :\alpha \in \omega_1\}$
is countable.  There exists an uncountable subset $S\subseteq
\omega_1$ such that $y_\alpha=y_\beta$ for any $\alpha ,\beta \in
S$. Then,  we put $B=\{(x_\alpha ,y_\alpha ) :\alpha \in  S\}$. 

\noindent Case 2: The set $\{y_\alpha :\alpha \in \omega_1\}$ is
uncountable.    
Since $Y$ has countable extent,  we may assume---up to removing one point---that the set $\{y_\alpha :\alpha \in
\omega_1\}$ is not closed. Therefore, there exists an uncountable set
$S\subseteq \omega_1$ such that the set $C=\{y_\alpha :\alpha \in
S\}$ is almost closed. As the set $cl_Y (C)$ has still countable
extent, by Lemma $\ref{lem: extent}$ there exists an (uncountable) set $T\subseteq S$ such that
the set $\{y_\alpha :\alpha \in T\}$ is a convergent sequence.
Then put $B=\{(x_\alpha, y_\alpha ) :\alpha \in T\}$.

In both of the previous cases, the set $B$ is a convergent
sequence   which is also an almost closed subset of $A$.
\end{proof}

\begin{Cor}\label{uno} 
The product of a Whyburn Lindel\"of $P$-space with a weakly Whyburn Lindel\"of $P$-space is weakly Whyburn.
\end{Cor}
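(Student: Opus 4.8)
The plan is to deduce Corollary~\ref{uno} directly from Theorem~\ref{th: prod}, so the only real work is to verify that a Whyburn Lindel\"of $P$-space falls under the hypotheses placed on the first factor $X$ in that theorem, namely that it is a pseudoradial $P$-space of radial character $\omega_1$. First I would invoke Lemma~\ref{lem: extent}: a Lindel\"of space has countable extent, so a Whyburn Lindel\"of $P$-space is in fact radial of radial character $\aleph_1$; in particular it is pseudoradial of radial character $\omega_1$. For the second factor $Y$, being a weakly Whyburn Lindel\"of $P$-space it is certainly a weakly Whyburn $P$-space of countable extent, since every Lindel\"of space has countable extent. Thus both hypotheses of Theorem~\ref{th: prod} are met, and the theorem yields that $X\times Y$ is weakly Whyburn.

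The one point that needs a sentence of care is that the roles of the two factors in Theorem~\ref{th: prod} are not symmetric: there the first factor must be (pseudo)radial of radial character $\omega_1$ and the second only weakly Whyburn of countable extent. In Corollary~\ref{uno} the Whyburn factor is the one to which Lemma~\ref{lem: extent} applies, so I would take it as the ``$X$'' of the theorem and the weakly Whyburn factor as the ``$Y$''. Since the product operation is commutative up to homeomorphism, this is purely a bookkeeping matter and no loss of generality is incurred.

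I do not expect any genuine obstacle here; the corollary is a straightforward specialization. If anything, the only thing to double-check is that ``radial of radial character $\aleph_1$'' does imply ``pseudoradial of radial character $\omega_1$'', which is immediate from the definitions recalled in the introduction (radial implies pseudoradial, and the radial character of the resulting pseudoradial space is no larger). So the proof is essentially a two-line application of the preceding lemma and theorem.

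\begin{proof}
Let $X$ be a Whyburn Lindel\"of $P$-space and $Y$ a weakly Whyburn Lindel\"of $P$-space. Since $X$ is Lindel\"of it has countable extent, so by Lemma~\ref{lem: extent} it is radial of radial character $\aleph_1$; in particular $X$ is a pseudoradial $P$-space of radial character $\omega_1$. Likewise $Y$, being Lindel\"of, has countable extent, so $Y$ is a weakly Whyburn $P$-space of countable extent. Theorem~\ref{th: prod} now applies and gives that $X\times Y$ is weakly Whyburn.
\end{proof}
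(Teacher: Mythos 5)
Your proposal is correct and is exactly the argument the paper intends: the corollary is stated immediately after Theorem~\ref{th: prod} with no separate proof, the point being that Lindel\"ofness gives countable extent, Lemma~\ref{lem: extent} upgrades the Whyburn factor to radial (hence pseudoradial) of radial character $\omega_1$, and the theorem then applies with the Whyburn factor in the role of $X$. Your remark about the asymmetry of the two factors and the harmless reordering is the only subtlety, and you handle it correctly.
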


\begin{Quest}
Is the product of two Lindel\"of weakly Whyburn $P$-spaces always
weakly Whyburn?
\end{Quest}


Constructing a non-Whyburn product of two Lindel\"of $P$-spaces
turned out not to be so easy. Since every compact Whyburn space
is Fr\'echet, the analogous problem for compact spaces is the
construction of two compact Fr\'echet spaces whose product is not
Fr\'echet. This is usually tackled using the one-point compactification of Mrowka spaces on suitable almost disjoint families on $\omega$ (see \cite{BR} and \cite{S2}), so it was natural to try and extend that approach.

We first recall that a family $\A$ of uncountable subsets of
$\omega_1$ is said to be \emph{almost disjoint} if $|a \cap
b|<\omega_1$ for all distinct $a,b \in \A$; it is said to be a MAD family
if it is maximal with respect to this property.
Define now a topology on $X=\omega_1 \cup \A$ in the following
way. Each point of $\omega_1$ is isolated and a neighbourhood of
$p \in \A$ is $\{p\} \cup p$ minus a countable set. The space $X$
is locally Lindel\"of so we may take its one-point
Lindel\"ofication $\L(\A)$; in concrete terms, $\L(\A)$ is obtained by adding a point $\infty$ to $\cA$, and is endowed with a topology whose restriction to $X$ coincides with the original topology, while the basic neighbourhoods of $\infty$ are all sets of the form $\{\infty\}\cup(\cA\stm\cF)\cup\big(\bigcup(\cA\stm\cF)\stm F\big)$, where $\cF\in[\cA]^{<\omega_1}$ and $F\in[\omega_1]^{<\omega_1}$. Notice that, this way, $\cL(
\cA)$ turns out to be a Lindel\"of (regular) P-space.\par 
Let $\A$ be an almost disjoint family on $\omega_1$ and let
$\mathcal{I}(\A)$ be the ideal of those uncountable subsets of
$\omega_1$ which can be {\it almost-covered} by a countable subfamily of $\A$ (which means that the set-theoretic difference between the former and the union of the latter one is countable); let also $\cI^+(\cA)=[\om_1]^{\om_1}\stm\cI(\cA)$. Furthermore, let $\mathcal{M}(\A)$ be the set of those $X \in
[\omega_1]^{\omega_1}$ such that $\A_X=\{X \cap p : p \in \A,\,\text{$X\cap p$ is uncountable} \}$ is a MAD family on $X$. We introduce the following:

\begin{Def}
An almost disjoint family $\A$ on $\omega_1$ is said to be
\emph{nowhere MAD} if $\M(\A) \subseteq \I(\A)$.
\end{Def}

Now we will link some topological features of $\L(\A)$ to the
combinatorial structure of $\A$ via the previous definition.

\begin{Th}
Let $\A$ be an almost disjoint family on $\omega_1$. The space
$\L(\A)$ is Whyburn if and only if $\A$ is nowhere MAD.
\end{Th}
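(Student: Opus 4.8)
The plan is to prove both implications by directly relating the Whyburn property at the point $\infty$ to the nowhere MAD condition, and to handle all other potential accumulation points by an easy argument. First I would observe that the only interesting case is a non-closed set $A\subseteq\cL(\cA)$ with $\infty\in\overline A\setminus A$: points of $\omega_1$ are isolated, and if some $p\in\cA$ lies in $\overline A\setminus A$ then (since a neighbourhood of $p$ is $\{p\}\cup p$ minus a countable set, and countable sets are closed in the $P$-space $\cL(\cA)$) the set $A\cap p$ is uncountable and $A\cap p\to p$ almost converges, witnessing Whyburn at $p$. Thus $\cL(\cA)$ is Whyburn iff for every $A$ with $\infty\in\overline A\setminus A$ there is an almost closed $F\subseteq A$ with $F\to\infty$. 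Moreover, by replacing $A$ with $A\cap\omega_1$ (removing the points of $\cA$, which form a closed set since a neighbourhood of $\infty$ already contains cofinitely many of them, and adjusting) one reduces to the case $A\subseteq\omega_1$; here $\infty\in\overline A$ precisely means $A\notin\cI(\cA)$, i.e.\ $A\in\cI^+(\cA)$, because a basic neighbourhood of $\infty$ misses $A$ iff $A$ is almost-covered by finitely (hence countably) many members of $\cA$.

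Next I would analyse, for $A\in\cI^+(\cA)$, what an almost closed $F\subseteq A$ with $F\to\infty$ looks like. Since $F\subseteq\omega_1$ has all its points isolated, $\overline F\setminus F=\{\infty\}$ forces every $p\in\cA$ to have $F\cap p$ countable (otherwise $p\in\overline F\setminus F$), and it forces $F\notin\cI(\cA)$ (otherwise $\infty\notin\overline F$). Conversely any $F\subseteq\omega_1$ with $F\cap p$ countable for all $p\in\cA$ and $F\notin\cI(\cA)$ is such an almost closed set converging to $\infty$. So the Whyburn property at $\infty$ amounts to: \emph{every $A\in\cI^+(\cA)$ contains a subset $F\in\cI^+(\cA)$ meeting every member of $\cA$ in a countable set.} Now I claim this fails for a particular $A$ exactly when $A\in\M(\cA)$. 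Indeed, if $A\in\M(\cA)$ then $\cA_A=\{A\cap p: A\cap p\text{ uncountable}\}$ is MAD on $A$; any $F\subseteq A$ with all $F\cap p$ countable is almost disjoint from every element of the MAD family $\cA_A$, hence by maximality $F$ is countable — in particular $F\in\cI(\cA)$, so no good $F$ exists. Conversely, if $A\in\cI^+(\cA)\setminus\M(\cA)$, then either $\cA_A$ fails to cover $A$ (mod countable), in which case $F=A\setminus\bigcup\cA_A$ — or rather the uncountable remainder witnessing non-coverage — meets every $p$ countably and, I must check, is not in $\cI(\cA)$; or $\cA_A$ is not maximal, so there is an uncountable $F\subseteq A$ almost disjoint from every $A\cap p$, hence $F\cap p$ countable for all $p$, and again $F$ is uncountable so $F\notin\cI(\cA)$ after a small argument (an uncountable set almost disjoint from every member of $\cA_A$ that were in $\cI(\cA)$ would be almost covered by countably many $p$'s, contradicting almost-disjointness from each $A\cap p$, roughly — this needs care).

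Assembling: $\cL(\cA)$ is Whyburn iff every $A\in\cI^+(\cA)$ admits such an $F$, iff no $A\in\cI^+(\cA)$ lies in $\M(\cA)$, iff $\M(\cA)\subseteq\cI(\cA)$ — noting that $\M(\cA)\subseteq[\omega_1]^{\omega_1}$ always, and that members of $\cI(\cA)$ are trivially fine since they are not even in the closure of nothing — wait, more precisely $\M(\cA)\cap\cI^+(\cA)=\emptyset$ is exactly $\M(\cA)\subseteq\cI(\cA)$, which is the definition of nowhere MAD. I would also double-check the reduction from general $A$ to $A\subseteq\omega_1$: if $A$ contains points of $\cA$, write $A=(A\cap\omega_1)\cup(A\cap\cA)$; since $A\cap\cA$ is closed (neighbourhoods of $\infty$ contain all but countably many of $\cA$, and $\cA$ itself is closed discrete outside $\infty$... actually $\cA$ is not discrete, but $A\cap\cA$ being closed in $\cL(\cA)$ should follow because its only possible limit point is $\infty$ which lies in it unless $A\cap\cA$ is almost all of... ), $\infty\in\overline A$ forces $\infty\in\overline{A\cap\omega_1}$, and an almost closed $F\to\infty$ inside $A\cap\omega_1$ serves for $A$ too.

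\textbf{Main obstacle.} The delicate point is the converse direction's second case — verifying that when $\cA_A$ is \emph{not} maximal on $A$, the uncountable set $F\subseteq A$ almost disjoint from all $A\cap p$ genuinely lies in $\cI^+(\cA)$ (equivalently, is not almost-covered by countably many members of $\cA$), and similarly that a witness to $\cA_A$ not covering $A$ can be taken in $\cI^+(\cA)$. This is where the interplay between "almost disjoint on $A$" and "almost covered in $\omega_1$" must be pinned down carefully, presumably using that each $p\in\cA$ with $F\cap p$ uncountable would contradict the chosen almost-disjointness, so only countably many $p$ could almost-cover $F$ and then $F\in\cI(\cA)$ would make $F$ almost covered — a contradiction to be organised precisely. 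Everything else (the isolated-point observations, closedness of countable sets in a $P$-space, the translation of neighbourhoods of $\infty$ into the ideal $\cI(\cA)$) is routine bookkeeping.
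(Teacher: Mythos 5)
Your core argument is correct and is essentially the paper's: the forward direction takes $X\in\M(\cA)\cap\cI^+(\cA)$ and uses MADness of $\cA_X$ to find, inside any uncountable $Y\subseteq X$, some $p\in\cA$ with $p\cap Y$ uncountable and hence $p\in\overline Y$; the converse extracts from non-MADness of $\cA_A$ an uncountable $Y\subseteq A$ with $Y\cap p$ countable for all $p\in\cA$ and concludes $Y\to\infty$. The step you flag as the ``main obstacle'' is not one: if $Y$ is uncountable and $Y\cap p$ is countable for every $p\in\cA$, then $Y$ cannot be almost covered by a countable $\cF\subseteq\cA$, since $Y\cap\bigcup\cF=\bigcup_{p\in\cF}(Y\cap p)$ is countable and would force $Y$ itself to be countable; this one-line check is all that is needed. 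Your two-case split (``$\cA_A$ fails to cover $A$'' versus ``$\cA_A$ is not maximal'') is also redundant, since an uncountable subset of $A$ disjoint from $\bigcup\cA_A$ already witnesses non-maximality; the single case is what the paper uses.

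The one genuine flaw is in your reduction to $A\subseteq\omega_1$. You assert that $A\cap\cA$ is closed and that $\infty\in\overline A$ forces $\infty\in\overline{A\cap\omega_1}$; both claims fail when $A\cap\cA$ is uncountable (for instance $A=\cA$ itself, with $\cA$ uncountable, has $\infty\in\overline A$ while $A\cap\omega_1=\emptyset$), and your write-up visibly trails off at exactly this point. The repair is the one the paper makes: if $A\cap\cA$ is uncountable, then $\overline{A\cap\cA}=(A\cap\cA)\cup\{\infty\}$ --- no point of $\omega_1$ is a limit point since those are isolated, and no $q\in\cA$ is, since a basic neighbourhood $\{q\}\cup q$ minus a countable set meets $\cA$ only in $\{q\}$ --- so $A\cap\cA$ is itself an almost closed subset of $A$ converging to $\infty$, and the Whyburn property is witnessed at $\infty$ with no appeal to nowhere MADness. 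Only when $A\cap\cA$ is countable (hence closed and with $\infty\notin\overline{A\cap\cA}$) does your reduction to $A\cap\omega_1$ go through. With that case inserted, your proof is complete and coincides with the paper's.
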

\begin{proof}
Suppose first that there is some $X \in \M(\A) \cap{\mathcal I}^+(\A)$,
then $\infty \in \overline{X}$ (where $\infty$ is the point added to $\cA$, to obtain $\cL(\cA)$). We will show that $X$ contains no
almost closed set converging to $\infty$. Indeed let $Y \subseteq X$ be 
such that $\infty \in \overline{Y}$; then $Y$ is clearly uncountable, hence---since the family $\A_X$ is MAD on $X$---there must be some $p \in \A$ such that $p \cap Y$ is
uncountable. So $p \in \overline{Y}$ and we are done. To prove
the converse note that at each $q \in \A$ the Whyburn property is clearly satisfied, therefore we only need to check it at $\infty$. Suppose that $\A$ is
nowhere MAD and pick some $X\subseteq\om_1\cup\cA$ such that $\infty
\in \overline{X}$. If $\infty\in\overline{X\cap\cA}$ then clearly $\overline{X\cap\cA}=
(X\cap\cA)\cup\{\infty\}$, so we may restrict ourselves to the case where $X\si\om_1$. 
Now, $\infty\in\overline{X}$ implies that $X\notin\cI(\cA)$ (in particular, of course, $X$ is uncountable), so by nowhere MADness of $\A$ we can find some uncountable $Y \subseteq X$ such that $Y \cap p$ is countable for each $p \in \A$. Thus
$Y$ is an almost closed set such that $Y \to \infty$.
\end{proof}

\begin{Lemma}\label{lem}
Let $\cA$ be a MAD family on a set $X$ of cardinality $\omega_1$, and $\{\cA_1,\cA_2\}$ 
be a partition of $\cA$, with $\cA_1$ nowhere MAD on $X$ and $\vert\cA_1\vert\ge\omega_1$. Suppose also to have associated to every $A\in\cA_1$ an element $M(A)$ of $[A]^{\omega_1}$. Then there exists $\tilde A\in\cA_2$ such that $\tilde A\cap\big(\bigcup_{A\in\cA'}M(A)\big)\neq\vuo$.  
\end{Lemma}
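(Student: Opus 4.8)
The plan is to set $S:=\bigcup_{A\in\cA_1}M(A)$ (reading the $\cA'$ of the statement as $\cA_1$, the only family on which $M$ is defined), to observe that $S\in[X]^{\om_1}$ and that $S\cap A\supseteq M(A)$ is uncountable for every $A\in\cA_1$, and then to argue by contradiction: assuming that no $\tilde A\in\cA_2$ meets $S$, I will derive a contradiction with $\vert M(A^\ast)\vert=\om_1$ for a suitable $A^\ast\in\cA_1$. The engine of the proof is the claim that $S\notin\I(\cA_1)$. To see this, suppose $S\stm\bigcup_{n\in\om}A_n$ is countable for some countable $\{A_n:n\in\om\}\si\cA_1$; since $\vert\cA_1\vert\ge\om_1$ I may choose $A^\ast\in\cA_1\stm\{A_n:n\in\om\}$, and then $M(A^\ast)\si A^\ast\si S$ forces $M(A^\ast)\stm\bigcup_{n}A_n$ to be countable, while $M(A^\ast)\cap A_n\si A^\ast\cap A_n$ is countable for each $n$ by almost disjointness of $\cA$; summing these two pieces, $M(A^\ast)$ would be countable, a contradiction. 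This is the only step that genuinely uses the hypothesis $\vert\cA_1\vert\ge\om_1$, and I expect it to be the crux of the whole argument.

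Next I would feed $S\notin\I(\cA_1)$ through the nowhere-MADness of $\cA_1$ (that is, $\M(\cA_1)\si\I(\cA_1)$) to conclude $S\notin\M(\cA_1)$: the trace family $(\cA_1)_S=\{S\cap p:p\in\cA_1\}$ is an almost disjoint family of uncountable subsets of $S$ (uncountable since $S\cap p\supseteq M(p)$; almost disjoint since $(S\cap p)\cap(S\cap q)\si p\cap q$ for distinct $p,q\in\cA$), so $S\notin\M(\cA_1)$ means precisely that $(\cA_1)_S$ is not \emph{maximal} on $S$. Hence there is an uncountable $Z\si S$ with $Z\cap(S\cap p)$ countable for every $p\in\cA_1$; since $Z\si S$, this says simply that $Z\cap p$ is countable for all $p\in\cA_1$. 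Finally I would apply the maximality of $\cA$ on $X$ to the uncountable set $Z\si X$: there is $\tilde A\in\cA$ with $Z\cap\tilde A$ uncountable (taking $\tilde A=Z$ if $Z\in\cA$). Because $Z\cap p$ is countable for every $p\in\cA_1$, necessarily $\tilde A\in\cA_2$, and then $\vuo\neq Z\cap\tilde A\si S\cap\tilde A$, contradicting the assumption that no member of $\cA_2$ meets $S$. Unwinding the contradiction yields the desired $\tilde A\in\cA_2$ with $\tilde A\cap S\neq\vuo$.

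Besides the counting step above, the only points requiring care are: checking that $(\cA_1)_S$ really is a legitimate almost disjoint family on $S$ whose non-maximality is what $S\notin\M(\cA_1)$ delivers (immediate here, because each $S\cap p$ with $p\in\cA_1$ contains the uncountable set $M(p)$, so there is no worry about ``small traces''); and making sure the witness $Z$ produced by non-maximality of $(\cA_1)_S$ has $Z\cap p$ countable for \emph{all} $p\in\cA_1$, not merely for those $p$ with $S\cap p$ uncountable — which is again automatic since every such $S\cap p$ is uncountable. Everything else is routine bookkeeping with almost disjointness.
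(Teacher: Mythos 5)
Your proof is correct and follows essentially the same route as the paper's: you show that $S=\bigcup_{A\in\cA_1}M(A)$ cannot be almost-covered by a countable subfamily of $\cA_1$ (using $\vert\cA_1\vert\ge\om_1$ plus almost disjointness), feed this through nowhere-MADness to extract an uncountable $Z\si S$ almost disjoint from every member of $\cA_1$, and then use maximality of $\cA$ on $X$ to find $\tilde A\in\cA_2$ meeting $Z\si S$ --- exactly the paper's argument (with its $M^\ast$ and $S$ playing the roles of your $S$ and $Z$), merely wrapped in an unnecessary contradiction. One cosmetic slip: the middle inclusion in your chain $M(A^\ast)\si A^\ast\si S$ is false in general, but the inclusion you actually use, $M(A^\ast)\si S$, is immediate from the definition of $S$, so nothing is affected.
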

\begin{proof}
Let, for the sake of simplicity, $M^\ast=\bigcup_{A\in\cA_1}M(A)$, and notice that $M^\ast\in\cJ^+(\cA_1)$. Indeed, if $\cF$ is a countable subfamily of $\cA_1$, then there exists an $\hat A\in\cA_1\stm\cF$. Since $\cA_1\si\cA$ and $\cA$ is almost disjoint, it follows that $\big\vert\hat A\cap\big(\bigcup\cF\big)\big\vert<\omega_1$. Therefore, in particular, $M\big(\hat A\big)\stm\bigcup\cF\neq\vuo$, whence also $M^\ast\stm\bigcup\cF\neq\vuo$.\par
Now, since $\cA_1$ is nowhere MAD, there must exist $S\in[M^\ast]^{\omega_1}$ such that the set $S\cap(A\cap M^\ast)=S\cap A$ is countable for every $A\in\cA_1$. On the other hand, since $S\in[M^\ast]^{\omega_1}\si[X]^{\omega_1}$ and $\cA$ is MAD on 
$X$, there will be $\tA\in\cA$ with $\big\vert\tA\cap S\big\vert=\omega_1$; therefore, such $\tA$ must belong to $\cA_2$, and of course we have in particular $\tA\cap M^\ast\supseteq\tA\cap S\neq\vuo$.
\end{proof}

\begin{Cor}\label{cor}
Let, as in the previous lemma, $\cA$ be a MAD family on a set $X$ of cardinality 
$\omega_1$, and $\{\cA_1,\cA_2\}$ be a partition of $\cA$, with $\cA_1$ nowhere MAD on $X$ and of cardinality $\omega_1$. Then there exists $A\in\cA_1$ such that $\big\vert A\cap\big(\bigcup\cA_2\big)\big\vert=\omega_1$. 
\end{Cor}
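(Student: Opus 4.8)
The plan is to deduce the corollary from Lemma~\ref{lem} by a one-line choice of witnesses followed by a contradiction. Assume, toward a contradiction, that the conclusion fails, i.e. that $\bigl|A\cap\bigl(\bigcup\cA_2\bigr)\bigr|<\omega_1$ for \emph{every} $A\in\cA_1$. Each such $A$, being a member of the MAD family $\cA$ on the set $X$ of cardinality $\omega_1$, is an uncountable subset of $X$ and hence has cardinality exactly $\omega_1$. Discarding from $A$ the fewer-than-$\omega_1$ many of its points that lie in $\bigcup\cA_2$ therefore leaves a set of full size, so I may legitimately put $M(A)=A\stm\bigl(\bigcup\cA_2\bigr)\in[A]^{\omega_1}$.

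Next I would feed this assignment $A\mapsto M(A)$ into Lemma~\ref{lem}: its hypotheses are precisely the hypotheses of the present corollary together with such an assignment, so it applies and yields some $\tilde A\in\cA_2$ with $\tilde A\cap\bigl(\bigcup_{A\in\cA_1}M(A)\bigr)\neq\vuo$. But by construction every $M(A)$ is disjoint from $\bigcup\cA_2$, hence so is $\bigcup_{A\in\cA_1}M(A)$, whereas $\tilde A\si\bigcup\cA_2$; consequently $\tilde A\cap\bigl(\bigcup_{A\in\cA_1}M(A)\bigr)=\vuo$, a contradiction. This contradiction proves that some $A\in\cA_1$ must satisfy $\bigl|A\cap\bigl(\bigcup\cA_2\bigr)\bigr|=\omega_1$.

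The argument is essentially immediate once Lemma~\ref{lem} is in hand, so I do not expect a real obstacle; the only point demanding any attention is the verification that $M(A)$ genuinely lies in $[A]^{\omega_1}$, which is exactly where the contradiction hypothesis $\bigl|A\cap\bigl(\bigcup\cA_2\bigr)\bigr|<\omega_1$ and the uncountability of members of the MAD family $\cA$ are used. Everything else is a direct invocation of the lemma together with the trivial set-theoretic observation $M(A)\cap\bigl(\bigcup\cA_2\bigr)=\vuo$.
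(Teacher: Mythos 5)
Your proof is correct and is essentially identical to the paper's: the same contradiction hypothesis, the same choice $M(A)=A\stm\bigl(\bigcup\cA_2\bigr)$, the same appeal to Lemma~\ref{lem}, and the same final clash with the disjointness of $M(A)$ from $\bigcup\cA_2$. The only difference is that you spell out explicitly why $M(A)\in[A]^{\omega_1}$, which the paper leaves as a one-clause remark.
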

\begin{proof}
Toward a contradiction, assume $\big\vert A\cap\big(\bigcup\cA_2\big)\big\vert<\omega_1$ for every $A\in\cA_1$; letting, for every $A\in\cA_1$, $M(A)=A\stm\big(\bigcup\cA_2\big)$, we see that each $M(A)$ is uncountable, hence by 
Lemma~\ref{lem} it follows that $\tA\cap\big(\bigcup_{A\in\cA_1}M(A)\big)\neq\vuo$
for some $\tA\in\cA_2$. Clearly, this is a contradiction, as each $M(A)$ is disjoint from 
$\bigcup\cA_2$. 
\end{proof}

\begin{Th}
Let $\A$ be an almost disjoint family on $\omega_1$, and $\{\cA_1,\cA_2\}$ be a partition of $\cA$ into two uncountable subfamilies. If $\A$ is not nowhere MAD while $\cA_1$ is nowhere MAD, then $\L(\A_1) \times\L(\A_2)$ is not a Whyburn space.
\end{Th}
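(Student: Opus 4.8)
The plan is to exhibit an explicit non-closed subset of $\cL(\A_1)\times\cL(\A_2)$ witnessing the failure of the Whyburn property, built from a ``diagonal'' copy of $\omega_1$ coordinatized by the bad part of $\A$. Since $\A$ is not nowhere MAD, the previous theorem tells us $\cL(\A)$ is not Whyburn, so there is $X\in\M(\A)\cap\I^+(\A)$; here $\A_X=\{X\cap p\colon p\in\A,\ X\cap p\text{ uncountable}\}$ is MAD on $X$ and $X$ is not almost-covered by countably many members of $\A$. Write $\cA_{X,i}=\{X\cap p\colon p\in\A_i,\ X\cap p\text{ uncountable}\}$ for $i=1,2$; then $\{\cA_{X,1},\cA_{X,2}\}$ is (essentially) a partition of the MAD family $\A_X$ on the set $X$ of size $\omega_1$. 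Since $\A_1$ is nowhere MAD one checks $\cA_{X,1}$ is nowhere MAD on $X$; and $\cA_{X,1}$ should be forced to be uncountable (if it were countable, its union would almost-cover $X$ by maximality of $\A_X$ together with $\cA_{X,2}$ being ``small'' on an appropriate co-countable piece — this is the point to be careful about, see below). Thus the hypotheses of Lemma~\ref{lem} and Corollary~\ref{cor} apply with $\cA=\A_X$.

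Next I would construct the test set. For each $A\in\cA_{X,1}$, pick a representative $p_A\in\A_1$ with $A=X\cap p_A$. Using that $\cA_{X,2}$ is MAD-complementary, for each $A\in\cA_{X,1}$ Corollary~\ref{cor} (or directly Lemma~\ref{lem}) lets us find points of $A$ lying in $\bigcup\cA_{X,2}$, i.e.\ choose for each such $A$ an injection or at least a selection of an uncountable set $M(A)\subseteq A$ together with, for each $\xi\in M(A)$, some $q(\xi)\in\A_2$ with $\xi\in q(\xi)$. Define
\[
D=\bigl\{(p_A,q(\xi))\in\cL(\A_1)\times\cL(\A_2)\colon A\in\cA_{X,1},\ \xi\in M(A)\bigr\}.
\]
The claim is that $(\infty_1,\infty_2)\in\overline D$ but no almost closed subset of $D$ converges to $(\infty_1,\infty_2)$, and moreover $D$ is not closed precisely because of this accumulation point. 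To see $(\infty_1,\infty_2)\in\overline D$: a basic neighbourhood of $\infty_i$ in $\cL(\A_i)$ omits only $<\omega_1$ members of $\A_i$ and $<\omega_1$ ordinals from the union; since $\cA_{X,1}$ is uncountable and each $M(A)$ is uncountable, after discarding the forbidden members and ordinals we still find $A$ and $\xi\in M(A)$ with $p_A$, $q(\xi)$ both allowed and $\xi$ witnessing the intersection — here one uses almost-disjointness to guarantee that only countably many $A$ can be ``ruined'' by the $\omega_1$-small forbidden sets. The key non-trivial verification is that any $E\subseteq D$ with $(\infty_1,\infty_2)\in\overline E$ has another point of its closure off the diagonal-type set: projecting $E$ to $\cL(\A_1)$ gives an uncountable subset $\cB\subseteq\A_1$ with $\infty_1\in\overline{\bigcup\cB}$ relative to $X$-coordinates, hence $\{X\cap p\colon p\in\cB\}$ is, by nowhere MADness of $\cA_{X,1}$ applied to the uncountable set of witnessing ordinals, \emph{not} MAD on that witness set, so there is an uncountable set of coordinates hit by no $p\in\cB$ but — by MADness of $\A_X$ on $X$ — hit by some $p^\ast\in\A$, and then $(p^\ast,\cdot)$ or $\infty_1$-avoiding analysis produces an extra limit point; symmetrically for the second coordinate.

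The main obstacle I expect is organizing the two-coordinate bookkeeping so that a single subset $E\subseteq D$ accumulating at $(\infty_1,\infty_2)$ genuinely forces a \emph{second} accumulation point in $\cL(\A_1)\times\cL(\A_2)$ rather than merely in one factor: one must show that the ``extra'' limit point coming from nowhere MADness of $\cA_{X,1}$ in the first coordinate can be realized as an honest point of the product space in $\overline E$, which requires pairing it with a compatible second coordinate drawn from the fibers of $E$. The clean way to do this is probably to argue by contradiction: assume $E\to(\infty_1,\infty_2)$; for each $p\in\pi_1(E)$ the fiber $E_p=\{q\colon(p,q)\in E\}$ is a subset of $\A_2$, and either some $\overline{E_p}$ is already a problem in the second factor, or all the $E_p$ are ``spread out'', in which case a diagonal/amalgamation over the uncountably many $p$'s together with Lemma~\ref{lem} applied inside $X$ yields a member of $\A_1\times\A_2$ (or a point $(\infty_1,q^\ast)$, $(p^\ast,\infty_2)$) in $\overline E$. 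Verifying that this amalgamation respects the neighbourhood filters at $\infty_1$ and $\infty_2$ simultaneously — i.e.\ that the countably-many exclusions allowed at each $\infty_i$ cannot sabotage the construction because almost-disjointness keeps the relevant traces uncountable — is the delicate computational heart of the argument, but it is of the same flavour as the verification already carried out for the single-factor case in the theorem characterizing Whyburnness of $\cL(\A)$.
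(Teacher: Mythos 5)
Your choice of test set is where the argument breaks, and it cannot be repaired by more careful bookkeeping. You place $D$ inside $\A_1\times\A_2$, so every point of $D$ is a pair of non-isolated points of the two factors. But each $\A_i$ is a closed discrete subspace of $\L(\A_i)$ whose only possible accumulation point is $\infty_i$ (a basic neighbourhood of $p\in\A_i$ is $\{p\}\cup p$ minus a countable set, which meets $\A_i$ only in $\{p\}$), and $\infty_i\in\overline{\cB}$ for $\cB\subseteq\A_i$ exactly when $\cB$ is uncountable. Hence for any $E\subseteq\A_1\times\A_2$ the only candidates for points of $\overline{E}\setminus E$ are $(\infty_1,\infty_2)$, points $(p,\infty_2)$ whose fiber $\{q:(p,q)\in E\}$ is uncountable, and points $(\infty_1,q)$ whose fiber $\{p:(p,q)\in E\}$ is uncountable. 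Now grant your claim that $(\infty_1,\infty_2)\in\overline{D}$, and recursively pick $(p_\alpha,q_\alpha)\in D$ for $\alpha<\omega_1$ with $p_\alpha\notin\{p_\beta:\beta<\alpha\}$ and $q_\alpha\notin\{q_\beta:\beta<\alpha\}$; each step succeeds because a basic neighbourhood of $(\infty_1,\infty_2)$ may exclude exactly these countably many members of $\A_1$ and $\A_2$ and must still meet $D$. The set $E=\{(p_\alpha,q_\alpha):\alpha<\omega_1\}\subseteq D$ is the graph of an injection, so all its fibers are singletons and $\overline{E}=E\cup\{(\infty_1,\infty_2)\}$. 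That is, $E$ is an almost closed subset of $D$ converging to $(\infty_1,\infty_2)$, so $D$ can never witness the failure of the Whyburn property there. The ``delicate computational heart'' you defer is not delicate but impossible: the almost disjoint combinatorics simply do not constrain subsets of $\A_1\times\A_2$, because distinct points $p_A$, $q(\xi)$ carry no shared coordinate that a neighbourhood of $\infty_1$ or $\infty_2$ could detect.

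The paper's proof instead takes the test set among the isolated points: $\Delta=\{(\alpha,\alpha):\alpha\in X\}$ for $X\in\M(\A)\cap\I^+(\A)$. If $B=\{(\alpha,\alpha):\alpha\in Y\}\subseteq\Delta$ accumulates at $(\infty,\infty)$, then $Y$ is uncountable, $\cA_Y$ is MAD on $Y$, and its trace $\cA_1'$ from $\A_1$ is nowhere MAD on $Y$; Corollary~\ref{cor} then yields $p\in\A_1$ with $\big\vert p\cap Y\cap\big(\bigcup\cA_2'\big)\big\vert=\omega_1$, and almost disjointness shows that $(p,\infty)$ is a second point of $\overline{B}\setminus B$. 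The essential feature is that a single ordinal $\alpha$ lies simultaneously in members of $\A_1$ and of $\A_2$, which is precisely the coupling between the two coordinates that your construction discards when it replaces ordinals by the points $p_A$ and $q(\xi)$ themselves. (Your secondary worry about whether $\cA_{X,1}$ is uncountable is a legitimate point to check before invoking Lemma~\ref{lem}, but it is not the main defect.)
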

\begin{proof}
Let $X \in \M(\A) \cap \I^+(\A)$. Consider the diagonal
$\Delta=\{(\alpha, \alpha) : \alpha \in X \}$; then $(\infty,
\infty) \in \overline{\Delta}$. To verify this fact, consider a basic neighbourhood $W$ of 
$(\infty,\infty)$ in $\L(\A_1)\times\L(\A_2)$ of the form 
\[
W=\Big(\{\infty\}\cup(\cA_1\stm\cF_1)\cup\big(\textstyle{\bigcup}(\cA_1\stm\cF_1)\stm F_1\big)\Big)\times\Big(\{\infty\}\cup(\cA_2\stm\cF_2)\cup\big(\textstyle{\bigcup}(\cA_2\stm\cF_2)\stm F_2\big)\Big),
\]
where $\cF_i\in[\cA_i]^{<\om_1}$ and $F_i\in[\om_1]^{<\om_1}$ for $i=1,2$; since $X\notin\cI(\cA)$, the set $X\stm\big(\big(\bigcup\cF_1\big)\cup\big(\bigcup\cF_2\big)\cup F_1\cup F_2\big)$ must be nonempty, and taking any $\al$ in it we easily see that $(\al,\al)\in\Delta\cap W$.\par
Now, let $B=\{(\alpha, \alpha) :
\alpha \in Y \} \subseteq \Delta$ be such that $(\infty, \infty)
\in \overline{B}$, with $Y \subseteq X$. Then $Y$ must be uncountable, and the family
$\cA_Y=\fin{A\cap Y}{A\in\cA,\,\vert A\cap Y\vert=\omega_1}$ is MAD on $Y$ (as the restriction of $\cA$ to $X$ is MAD on $X$), while the family $\cA_1'=\big\{A\cap Y\,\colon\,A\in\cA_1,\,\vert A\cap Y\vert=\omega_1\big\}$ is nowhere MAD on $Y$ (as $\cA_1$ is nowhere MAD on $\omega_1$). Letting also $\cA_2'=\fin{A\cap Y}{A\in\cA_2,\,\vert A\cap Y\vert=\omega_1}$, we see that $\{\cA_1',\cA_2'\}$ is a partition of $\cA_Y$, so that by Corollary~\ref{cor} there exists $A^\ast\in\cA_1'$ such that $\big\vert A^\ast\cap\big(\bigcup\cA_2'\big)\big\vert=\omega_1$. Let $p\in\cA$ be such that $A^\ast=p\cap Y$; we claim that 
$(p,\infty)\in\overline B$.\par
Indeed, consider a basic neighbourhood of $(p,\infty)$ of the form 
\[
\big(\{p\}\cup(p\stm F_1)\big)\times\Big(\{\infty\}\cup(\cA_2\stm\cF_2)\cup\big(\textstyle{\bigcup}(\cA_2\stm\cF_2)\stm F_2\big)\Big),
\]
with $\cF_2\in[\cA_2]^{<\omega_1}$ and $F_1,F_2\in[\omega_1]^{<\omega_1}$. Then from $\big\vert A^\ast\cap\big(\bigcup\cA_2'\big)\big\vert=\omega_1$ it follows that $\big\vert p\cap\big(\bigcup\cA_2\big)\cap Y\big\vert=\omega_1$; moreover, since $\vert 
A\cap p\vert<\omega_1$ for every $A\in\cF_2$, it also follows that $\big\vert p\cap\big(\bigcup(\cA_2\stm\cF_2)\big)\cap Y\big\vert=\omega_1$, and finally that $\big\vert (p\stm F_1)\cap\big(\bigcup(\cA_2\stm\cF_2)\stm F_2\big)\cap Y\big\vert=\omega_1$. Then taking $\hal\in(p\stm F_1)\cap\big(\bigcup(\cA_2\stm\cF_2)\stm F_2\big)\cap Y$, we see that 
\[
(\hal,\hal)\in\Big(\big(\{p\}\cup(p\stm F_1)\big)\times\Big(\{\infty\}\cup(\cA_2\stm\cF_2)\cup\big(\textstyle{\bigcup}(\cA_2\stm\cF_2)\stm F_2\big)\Big)\Big)\cap B.
\]
\end{proof}

Thus to find a non-Whyburn product of Lindel\"of Whyburn
$P$-spaces it suffices to construct a MAD family on $\omega_1$
which can be split into two nowhere MAD families. This can be
done under a proper set-theoretic assumption, by a well-known line of reasoning (see \cite{BR}).

\begin{Th}[{[$2^{\aleph_1}=\aleph_2$]}] There is a MAD family on $\omega_1$
which can be split into two nowhere MAD subfamilies.
\end{Th}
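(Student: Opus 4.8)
The plan is to build $\cA=\cA_1\cup\cA_2$ by a transfinite recursion of length $\om_2$. Since $2^{\aleph_1}=\aleph_2$, the set $[\om_1]^{\om_1}$ has cardinality $\aleph_2$, so a standard bookkeeping over the $\om_2$ stages lets us, at cofinally many stages, handle the next instance of each of the following tasks: (M) for each $X\in[\om_1]^{\om_1}$, make some member of $\cA$ meet $X$ in an uncountable set --- this will make $\cA$ MAD; (W1) for each $X$, exhibit an uncountable $Y\si X$ almost disjoint from the final $\cA_1$, unless $X$ is going to lie in $\I(\cA_1)$ anyway; (W2) the mirror task for $\cA_2$; (P) make $\cA_1$ and $\cA_2$ both of cardinality $\aleph_2$. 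Tasks (W1) and (W2), ranging over all $X$, will give that $\cA_1$ and $\cA_2$ are nowhere MAD. Throughout I maintain that $\cA^{<\al}:=\cA_1^{<\al}\cup\cA_2^{<\al}$ is an almost disjoint family of size $\le\aleph_1$, with $\cA_1^{<\al}$ and $\cA_2^{<\al}$ disjoint, every new set chosen almost disjoint from everything already used; at a limit stage I just take unions, which preserves these properties since $|\al|\le\aleph_1$ for $\al<\om_2$.

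The engine is the following elementary \emph{Extension Lemma}, which is the ``well-known line of reasoning'' alluded to: if $\cC$ is a family of subsets of $\om_1$ with $|\cC|\le\aleph_1$ and $X\in[\om_1]^{\om_1}$ satisfies ``$X\stm\bigcup\cF$ is uncountable for every countable $\cF\si\cC$'', then there is an uncountable $Y\si X$ with $|Y\cap C|\le\aleph_0$ for all $C\in\cC$. Indeed, enumerate $\cC=\{C_\mu:\mu<\om_1\}$ and recursively pick $x_\eta\in X\stm(\{x_{\eta'}:\eta'<\eta\}\cup\bigcup_{\mu<\eta}C_\mu)$ --- legitimate because $\{C_\mu:\mu<\eta\}$ is countable for $\eta<\om_1$ --- and set $Y=\{x_\eta:\eta<\om_1\}$; since $x_\eta\in C_\mu$ forces $\eta\le\mu$, each $Y\cap C_\mu$ is countable. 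So a family of size $\le\aleph_1$ on $\om_1$ is never ``MAD below'' a set it does not countably almost-cover; because each $\cA^{<\al}$ has size $\le\aleph_1$ the lemma is always applicable, and this is exactly the role of $2^{\aleph_1}=\aleph_2$ --- to keep the recursion short enough that the partial families stay that small.

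Now the stages. For (M) at $X$: if some member of $\cA^{<\al}$ already meets $X$ uncountably, do nothing; else $X$ is a.d.\ from $\cA^{<\al}$, and I add (to either side) an uncountable $Z\si X$ with $X\stm Z$ also uncountable, keeping the leftover $X\stm Z$ available as ``reservoir'' for task (P). For (W1) at $X$: if $X$ is almost covered by countably many members of $\cA_1^{<\al}$, do nothing, since ``$X\in\I(\cA_1)$'' persists as $\cA_1$ grows; else, if some $d\in\cA_2^{<\al}$ has $|X\cap d|=\om_1$, again do nothing, because $X\cap d$ is an uncountable subset of $X$ automatically almost disjoint from the final $\cA_1$ ($X\cap d\si d\in\cA$, and $\cA$ stays almost disjoint); else $X$ is a.d.\ from $\cA_2^{<\al}$, and the Extension Lemma applied to $\cC=\cA_1^{<\al}$ gives an uncountable $Y\si X$ a.d.\ from $\cA_1^{<\al}$, which, being $\si X$, is a.d.\ from $\cA_2^{<\al}$ as well, hence from $\cA^{<\al}$; I add $Y$ to $\cA_2$ so that later members of $\cA_1$ avoid it. Task (W2) is symmetric, adding into $\cA_1$; task (P) is met by alternately adding to $\cA_1$ and $\cA_2$ uncountable sets a.d.\ from $\cA^{<\al}$, available as long as $\cA^{<\al}$ is not yet MAD. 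In the end: $\cA$ is almost disjoint (we always extended an a.d.\ family) and maximal (by (M)), so $\cA$ is MAD; $\cA_1$ and $\cA_2$ are uncountable (by (P)), which forces $\cA$ to be non-covering --- a countable subfamily almost covering $\om_1$ would, by pigeonhole, already be a MAD family, impossible inside the strictly larger almost disjoint family $\cA$ --- so $\cA$ is not nowhere MAD (as one needs for the preceding theorem); and every witness produced for (W1)/(W2) stays a.d.\ from the final $\cA_1$/$\cA_2$, being either parked on the opposite side (so every later member avoids it) or contained in a member of $\cA$ --- hence $\cA_1$ and $\cA_2$ are nowhere MAD.

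The point that really demands care in the bookkeeping is keeping the recursion alive. A witness for $\cA_1$ created at some stage must survive every subsequent addition to $\cA_1$ --- this is precisely what the two-sided ``park it on the other side'' device above guarantees --- and $\cA^{<\al}$ must not become maximal before task (P) is done, nor may the supply of uncountable ``reservoir'' sets almost disjoint from $\cA^{<\al}$ run out. The genuine difficulty lies at limit stages of countable cofinality, where one must arrange that the reservoir is not absorbed by a cofinal $\om$-sequence of additions; this is routine but is where the construction has to be set up carefully. No combinatorial ingredient beyond the Extension Lemma and $2^{\aleph_1}=\aleph_2$ is needed.
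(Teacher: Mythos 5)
Your construction is correct in substance, but it is organized quite differently from the paper's. The paper does not run a multi-task bookkeeping recursion: it builds, by a recursion of length $\om_2$ over an enumeration of $[\om_1]^{\om_1}$, a single almost disjoint family $\S'$ (then extended to a MAD family $\S$) with the dichotomy property that every $X\in[\om_1]^{\om_1}$ is either almost covered by countably many members of $\S$ or \emph{contains} a member of $\S$; the engine producing a member inside $X$ is exactly your Extension Lemma (the diagonal choice avoiding $\bigcup_{\gamma\le\beta}T^\al_\gamma$ and the previously chosen points). It then bisects every $S\in\S$ into two uncountable halves $S^+,S^\ast$ and takes the two subfamilies to be $\S^+$ and $\S^\ast$: whenever $X\notin\I(\S^\ast)$, the dichotomy hands you a whole $S\si X$, and the opposite half $S^+\si X$ is an uncountable subset of $X$ automatically almost disjoint from all of $\S^\ast$. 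This splitting trick buys exactly what your ``park the witness on the other side'' device buys, but for free and symmetrically: no interleaving of witness-creation for the two halves, no tracking of which side a witness lives on, and no concern about witnesses surviving later additions. Your scheme is more flexible but pays for that with the extra bookkeeping you flag at the end.

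On that last point, the place where your write-up is genuinely under-specified is not where you locate it. Limit stages are harmless: an increasing union of almost disjoint families of size $\le\aleph_1$ is again one, and an almost disjoint family $\cD$ of uncountable subsets of $\om_1$ with $\aleph_0<\vert\cD\vert\le\aleph_1$ is \emph{never} maximal --- a countable almost-covering subfamily would itself be MAD by pigeonhole, contradicting the extra members, and absent such a subfamily your Extension Lemma produces a fresh almost disjoint set. So the reservoir cannot run dry once $\cA^{<\al}$ is uncountable, with no special care needed at any limit. The only real danger is during the first $\om_1$ stages, while $\cA^{<\al}$ is still countable: an unlucky order of additions could make it almost cover $\om_1$, hence MAD, killing the recursion. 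The paper forecloses this by seeding with $\S_0'$, a partition of $\om_1$ into $\om_1$ many uncountable pieces; do the same (splitting the seed between $\cA_1$ and $\cA_2$ costs nothing, since $\I$ only sees countable subfamilies). With that one line added, your argument is complete, including your correct extra observation that uncountability of $\cA$ forces $\om_1\in\M(\cA)\cap\I^+(\cA)$, which is what the preceding theorem actually needs.
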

\begin{proof}
Let $\A=\{A_\alpha : \alpha<\omega_2\}$ be an enumeration of $[\om_1]^{\om_1}$. 
We will construct almost disjoint families
$\{\S_\alpha: \alpha < \omega_2\}$ of uncountable subsets of
$\omega_1$ with the property that for each $\beta \leq \alpha$
either $A_\beta$ is covered by a countable subcollection of
$\S_\alpha$ or there is some $S \in\S_\alpha$ such that $S
\subseteq A_\beta$. Let $\S_0'$ be any partition of $\omega_1$
into $\omega_1$ many uncountable sets. If $A_0$ is covered by a
countable subfamily of $\S_0'$ , let $\S_0=\S_0'$, else let
$\S_0'=\{T_\alpha: \alpha \in \omega_1\}$ be an enumeration; then
for each $\beta \in \omega_1$ the set $A_0 \setminus
\bigcup_{\alpha \leq \beta} T_\alpha$ is uncountable. Pick $a_0
\in A_0 \setminus T_0$ and for each $\beta \neq 0$ pick
$$a_\beta \in A_0 \setminus\Big(\bigcup_{\alpha \leq \beta} T_\alpha
\cup \{a_\alpha : \alpha < \beta\}\Big).$$
Let $S_0=\{a_\alpha : \alpha \in \omega_1\}$ and set $\S_0=\S_0'
\cup \{S_0\}$. Clearly, $S_0\si A_0$ and $\cS_0$ is almost disjoint.\par
If $\S_\beta$ is chosen for each $\beta<\alpha$,
$\S_\alpha'=\bigcup_{\beta<\alpha} \S_\beta$. If $A_\alpha$ is
not covered by a countable subfamily of $\S_\alpha'$ let
$\S_\alpha'=\{T^{\alpha}_\gamma : \gamma \in \omega_1\}$ be an
enumeration; then, for each $\beta \in \omega_1$ the set
$A_\alpha \setminus \bigcup_{\gamma \leq \beta}
T^{\alpha}_\gamma$ is uncountable. Pick $a_0^\alpha \in A_\alpha
\setminus T_0^\alpha$ and for each $\beta \neq 0$ pick

$$a_\beta^\alpha \in A_\alpha \setminus\Big(\bigcup_{\gamma \leq
\beta} T_\beta^\alpha \cup \{a_\gamma^\alpha : \gamma<\beta\}\Big).$$
Let $S_\alpha=\{a_\gamma^\alpha : \gamma \in \omega_1\}$ and let
$\S_\al=\S_\alpha' \cup \{S_\alpha\}$. It is clear that each $\S_\alpha$
has the property stated at the beginning and is almost disjoint;
note also that $\S_\beta \subseteq\S_\alpha$ whenever $\beta<\alpha$. Put 
$\S'=\bigcup_{\alpha \in \omega_2} \S_\alpha$; since
$\S'$ is almost disjoint there is some MAD family $\S$ such that
$\S' \subseteq \S$. The family $\S$ inherits from $\S'$ the property that if $X
\subseteq \omega_1$ then either $X$ is covered by a countable
subfamily of $\S$ or there is some $S \in\cS$ such that $S
\subseteq X$. For each $S \in \S$ let $\{S^+,S^\ast\}$ be a partition of $S$ into two sets of cardinality $\om_1$; then set $\S^\ast=\{S^\ast : S \in \S\}$ and let $\S^+=\{S^+ : S \in
\S\}$. We see that $\T=\S^\ast \cup \S^+$ is a MAD family on $\omega_1$
with the property that if $X \subseteq \omega_1$ is not covered
by a countable subfamily of $\T$ then there are disjoint $S_1 \in
\S^\ast$ and $S_2 \in \S^+$ with $S_1 \cup S_2 \subseteq X$.
This clearly implies that $\S^\ast$ and $\S^+$ are nowhere MAD.
\end{proof}

\begin{Cor}[{[$2^{\aleph_1}=\aleph_2$]}]
There exist two Lindel\"of Whyburn P-spaces whose product is not Whyburn.
\end{Cor}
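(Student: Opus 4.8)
The plan is to chain together the results that have just been proved. From the theorem immediately preceding (``There is a MAD family on $\omega_1$ which can be split into two nowhere MAD subfamilies''), obtain under the hypothesis $2^{\aleph_1}=\aleph_2$ a MAD family $\A$ on $\omega_1$ together with a partition $\{\cA_1,\cA_2\}$ into two nowhere MAD subfamilies; note that each $\cA_i$ is automatically uncountable, since a countable almost disjoint family on $\omega_1$ is never MAD, and if, say, $\cA_2$ were countable then $\cA_1=\A\stm\cA_2$ would be MAD, contradicting nowhere MADness. Since $\A$ is MAD it is in particular not nowhere MAD (a MAD family $\A$ witnesses $\om_1\in\M(\A)\cap\I^+(\A)$, as $\om_1$ is obviously not almost covered by a countable subfamily of $\A$). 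Thus the hypotheses of the penultimate theorem are met.

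Next I would feed this into the theorem ``If $\A$ is not nowhere MAD while $\cA_1$ is nowhere MAD, then $\L(\A_1)\times\L(\A_2)$ is not a Whyburn space'' to conclude that $\L(\cA_1)\times\L(\cA_2)$ fails the Whyburn property. It remains only to observe that each factor $\L(\cA_i)$ is itself a Lindel\"of Whyburn $P$-space. That each $\L(\cA_i)$ is a Lindel\"of regular $P$-space was recorded in the discussion constructing $\L(\A)$. That each $\L(\cA_i)$ is Whyburn follows from the theorem ``$\L(\A)$ is Whyburn if and only if $\A$ is nowhere MAD'', applied with $\A$ replaced by the nowhere MAD family $\cA_i$; here one uses that the partition of a nowhere MAD family is nowhere MAD, which for $\cA_1$ is hypothesized outright, and for $\cA_2$ follows because $\cA_2$, being a subfamily of the MAD family $\A$ and itself nowhere MAD by the construction (the splitting theorem produces two nowhere MAD pieces), qualifies. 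This assembles the corollary.

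I do not expect a genuine obstacle here: the corollary is a bookkeeping consequence of the three preceding theorems, so the ``hard part'' is merely making sure the nowhere MADness hypothesis is available for \emph{both} $\cA_1$ and $\cA_2$ — the splitting theorem is stated so as to give exactly that, so one just needs to cite it correctly — and making sure each $\L(\cA_i)$ is verified to be a Lindel\"of Whyburn $P$-space by invoking the earlier characterization rather than reproving anything. So the proof is essentially one paragraph: invoke the splitting theorem to get $\A=\cA_1\cup\cA_2$ with both pieces nowhere MAD; note $\A$ is not nowhere MAD; apply the product theorem to get that $\L(\cA_1)\times\L(\cA_2)$ is not Whyburn; apply the characterization theorem to each factor to see it is a Lindel\"of Whyburn $P$-space; done.
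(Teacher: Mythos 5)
Your overall assembly is exactly what the paper intends: the corollary is stated without proof precisely because it follows by feeding the splitting theorem's family $\T=\S^\ast\cup\S^+$ into the theorem on $\L(\A_1)\times\L(\A_2)$ and using the characterization ``$\L(\A)$ is Whyburn iff $\A$ is nowhere MAD'' on each factor, together with the earlier observation that every $\L(\A)$ is a Lindel\"of regular $P$-space.

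However, your parenthetical justification that each $\cA_i$ is ``automatically'' uncountable is wrong on both counts. First, a countable almost disjoint family on $\omega_1$ \emph{can} be MAD in the sense of this paper: a partition of $\omega_1$ into two uncountable sets is already a MAD family, since any uncountable $X\si\om_1$ meets one of the pieces in an uncountable set. (Indeed one can check that every countable almost disjoint family is nowhere MAD, so uncountability of the pieces is a substantive hypothesis of the product theorem, not a freebie.) Second, the implication ``if $\cA_2$ were countable then $\cA_1=\A\stm\cA_2$ would be MAD'' is backwards: removing any nonempty subfamily from a MAD family destroys maximality, because each removed set is an uncountable set almost disjoint from everything that remains. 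The fact you need is true, but for a different reason: in the construction of the splitting theorem, $\S\supseteq\S_0'$, where $\S_0'$ is a partition of $\omega_1$ into $\omega_1$ many uncountable sets, so $\vert\S\vert\ge\om_1$ and hence $\vert\S^\ast\vert=\vert\S^+\vert\ge\om_1$. With that substitution (and noting, as you do, that an uncountable MAD family is never nowhere MAD since $\om_1\in\M(\T)\cap\I^+(\T)$), your one-paragraph proof is correct and coincides with the paper's intended argument.
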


\begin{Quest}
Does there exist in ZFC a non-Whyburn product of Lindel\"of
Whyburn $P$-spaces?
\end{Quest}

\section{Acknowledgements}
The authors are indebted to Eva Murtinov\'a for several helpful comments, that improved the exposition of the paper. The third author is indebted to Frank Tall for pointing him to Shelah and Todorcevic's paper and to the members of the Auburn General Topology seminar, especially to Stu Baldwin and Gary Gruenhage, for some helpful comments during a talk where a preliminary version of this paper was presented. Finally, the authors are very grateful to the referee for his really careful reading of the paper, and for a number of suggestions which have been inserted in the present version.

\end{document}